\newtheorem{theorem}{Theorem}
\newtheorem{corollary}[theorem]{Corollary}
\newtheorem{definition}[theorem]{Definition}
\newtheorem{proposition}[theorem]{Proposition}
\newtheorem{remark}[theorem]{Remark}
\newenvironment{proof}[1][Proof]{\noindent\textbf{#1.} }{\ \rule{0.5em}{0.5em}}
\newcommand{\R}{\mathbb{R}}
\newcommand{\g}{\tilde{\mathfrak{g}}}
\newcommand{\sS}{\mathbb{S}}
\newcommand{\so}{\mathfrak{so}}
\begin{document}

\title{Lagrangian Reduction by Stages in Field Theory}
\author{Miguel \'{A}. Berbel \and Marco Castrill\'{o}n L\'{o}pez}
\date{}
\maketitle

\begin{abstract}
We propose a category of bundles in order to perform Lagrangian reduction by stages in covariant Field Theory. This category plays an analogous role to Lagrange-Poincar\'e bundles in Lagrangian reduction by stages in Mechanics and includes both jet bundles and reduced covariant configuration spaces.  Furthermore, we analyze the resulting reconstruction condition and formulate the Noether theorem in this context. Finally, a model of a molecular strand with rotors is seen as an application of this theoretical frame.
\end{abstract}

\noindent \emph{Mathematics Subject Classification} \emph{2020:} Primary
58E15; Secondary 37J37, 58D19, 70S05, 70S10. 

\medskip

\noindent \emph{Key words and phrases:} Field Theory, Symmetries, Covariant reduction, reduction by stages, Lagrange-Poincar\'e category.

\section{Introduction}
Symmetry represents the core of many (and probably the most important) tools developed to tackle dynamical systems. In particular, since the geometric formulation of Mechanics  (for example, Arnold \cite{arnold}, Marsden \cite{abraham}, Moser \cite{MS} and the references therein among other), special attention has been always given to those systems endowed with a group of symmetries. When the system is modelled on a manifold as the configuration space, the symmetry is expressed in terms of smooth actions of Lie groups. One natural  procedure is thus the construction of the quotient of the manifold by the group in the case of actions satisfying certain good properties. This is the so-called reduction procedure, which can be performed both in the Lagrangian and in the Hamiltonian geometric pictures of systems and that has attired and still attires the attention of many papers and books (a good reference of the, can be \cite{MRbook}). With the word \emph{system} we can include the evolution of particles governed by variational principles, symplectic forms or Poisson brackets, or time evolution of sections of bundles (of which Mechanics is a particular case) in a variational or multisymplectic approach. In full generality, systems describing section of bundles with no-prescribed time evolution (for example, covariant fields on space-times or geometric theories as harmonic maps) are also included in this versatile panorama.

Restricting ourselves to the Lagrangian or variational case, the Lagrangian functions are defined in the phase space of the system, a manifold including independent variables (as positions) together with their derivatives. The tangent bundle is the paradigmatic example in the case of Mechanics, which is generalized to jet spaces for Field Theories \cite{gotay}. The constructions of the variations, the variational principle and the equations for critical solutions is perfectly described in terms of geometric objects. In parallel, when reduction is performed, the new variations, the new variational principle as well as the new equations are now written in the reduced phase spaces that is not a tangent or jet space. This is the so-called Lagrange-Poincar\'e reduction first introduced in Mechanics (see \cite{CMR} for a historical account) and generalized for arbitrary bundles in \cite{cov.red} and \cite{LP}. An important consequence of the new nature of the reduced phase space arises when one needs to concatenate consecutive reductions. Indeed, there are many situations where the symmetry group is split in two or more parts of completely different properties. This difference may require a separate reduction for each part, a procedure called {\bf reduction by stages}. The work of Cendra, Marsden and Ratiu \cite{CMR} gave the convenient setting for this recursive reductions scheme that has been extensively used in the literature (just to mention some, the reader can go to \cite{Bl}, \cite{CF}, \cite{Fer}, \cite{Gay}, \cite{GS}). For that, a new category of phase spaces is introduced: the Lagrange-Poincar\'e category in Mechanics. See also \cite{BC} for the complete description of this category that closes some of the issues left open in \cite{CMR}.

The goal of this work is the construction of the Lagrange-Poincar\'e category for Field Theories. This includes the definition of the new phase manifolds, the variations, the variational principle and the equations for the critical sections of the configuration bundles. As we already learned from Lagrange-Poincar\'e reduction, these equations are split into two groups known as horizontal and vertical equations. Everything fits in a reduction program so that the reduced objects and principles by the action of a groups remain in the category and hence they can be object of repetitive reductions. This new category includes the Lagrange-Poincar\'e category in Mechanics as a particular case. The structure of the paper is as follows. In section 2 we provide the required preliminaries. This section also recalls the Lagrange-Poncar\'e category in Mechanics. In section 3, the Lagrange-Poincar\'e reduction principle for Field Theories is reviewed. In sections 4 and 5, the construction of the Lagrange-Poincar\'e category for Field Theories is introduced together with the detailed description of the variational principle and equations. As we mentioned before, all this follows the spirit and generalizes the particular case of Mechanics but, interestingly, this particularization can be pushed back and some properties of the general case can be directly derived from Mechanic as we show below. Section 6 analyzes and confirms the correct behavior of the theory when successive reductions are performed. Section 7 studies the reconstruction process from solution of the variational problem after reduction, to solutions of the unreduced problem. One gets the characteristic trait in Field Theories that one does not find in the Lagrange-Poincar\'e category in Mechanics: a compatibility condition (that is always satisfied in local neighborhoods) is needed to perform the reconstructions. This is already present since the first works on reduction for Field theories (see \cite{marco}, \cite{cov.red}) and continues in more recent works (see \cite{EPsubgroup}, \cite{LP}, \cite{CR}). As usual in these papers we describe it as the vanishing of the curvature of certain connection. If one thing can be intrinsically attached with the notion of symmetry is that of Noether current. In section 8 we explore this object in the new category and analyze its conservation. In fact, it is proved that the Noether current is not conserved in general, but it satisfies a specific drift law. The interesting property of this law is that is makes part of the vertical equation when reduction is performed. Roughly speaking, reductions make the vertical equation involve more and more variables (and hence, the horizontal equations become smaller and smaller) by adjoining the successive Noether drift laws to them. 

We complete the work in section 9 with an example. One paradigmatic instance of reduction by stages in Mechanics is the rigid body with rotors. Here we analyze the geometric setting describing a molecular strand composed by a chain of rigid bodies (as it is done in \cite{Mol Strand}) such that each body have one or more rotors. This could be regarded as a model of linked molecules with a rotating side chain(s). Simple proteins of aminoacids could fit in this context. Future work will include further applications of the theory to other models which can be inspired by the numerous applications of the reduction by stages in Mechanics or can be taken from new systems of purely covariant nature.

\section{Preliminaries}\label{preliminaries}
\subsection{Principal bundles.} Let $G$ be a Lie group acting freely and properly on the left on a manifold $Q$. Then, the quotient $Q/G$ is also a manifold and the projection $\pi_{Q/G,Q} :Q\rightarrow Q/G$ is a principal $G$-bundle. We recall that a \textit{principal connection }$\mathcal{A}$ on $Q\rightarrow Q/G$ is a $1$-form on $Q$ taking values on $\mathfrak{g}$, the Lie algebra of $G$, such that $\mathcal{A}(\xi _{q}^{Q})=\xi $, for any $\xi \in\mathfrak{g}$, $q\in Q$, and $\rho_{g}^*\mathcal{A}=\mathrm{Ad}_{g}\circ \mathcal{A}$, where $\rho_{g}:Q\to Q$ denotes the action by $g\in G$ and 
$$\xi ^{Q}_{q}=\left. \frac{d}{dt}\right\vert _{t=0}\exp (t\xi )\cdot q\in T_qQ.$$
Such a principal connection splits the tangent space as $T_{q}Q=H_{q}Q\oplus V_{q}Q$, for all $q\in Q$, where
$$V_{q}Q=\ker T_q\pi_{Q/G,Q}=\{v\in T_{q}Q|T_q\pi_{Q/G,Q}(v)=0\},\qquad q\in Q,$$
$$H_{q}Q=\ker \mathcal{A}_{q}=\{v\in T_{q}Q|\mathcal{A}_{q}(v)=0\},\qquad q\in Q,$$
are respectively called \textit{vertical} and \textit{horizontal subspace}. In fact, $H_{q}Q$ is isomorphic to $T_x(Q/G)$, $x=\pi_{Q/G,Q}(q)$, through $T_q\pi_{Q/G,Q}$. The inverse of this isomorphisms is called \emph{horizontal lift} and is denoted by $\mathrm{Hor}^{\mathcal{A}}_q$. The \textit{curvature} of a connection $\mathcal{A}$ is the $\mathfrak{g}$-valued $2$-form $$B(v,w)=d\mathcal{A}(\mathrm{Hor}(v),\mathrm{Hor}(w)),$$ where $v,w\in T_qQ$ and $\mathrm{Hor}(v)$ is the projection of $v\in T_qQ$ to $H_{q}Q$. 

The \textit{adjoint bundle} to $Q\to Q/G$ is the associated bundle $(Q\times \mathfrak{g})/G$ by the adjoint action of $G$ on $\mathfrak{g}$. We shall denote it by  $\mathrm{Ad} Q\rightarrow Q/G$ and its elements  by $[q,\xi]_{G}$, $q\in Q$, $\xi \in \mathfrak{g}$. Remarkably, $\mathrm{Ad} Q\to Q/G$ is a Lie algebra bundle since it is a vector bundle equipped with a fiberwise Lie bracket given by
$$\lbrack \lbrack q,\xi _{1}]_{G},[q,\xi _{2}]_{G}]=[q,[\xi _{1},\xi
_{2}]]_{G},\qquad \lbrack q,\xi _{1}]_{G},[q,\xi _{2}]_{G}\in
\mathrm{Ad} Q_{x},\hspace{2mm}x=\pi_{Q/G,Q} (q).$$
The principal connection $\mathcal{A}$ on $Q\to Q/G$ defines a linear connection on the vector bundle $\mathrm{Ad} Q\rightarrow Q/G$, denoted $\nabla^{\mathcal{A}}$ and given by the covariant derivative along curves
$$\frac{D[q(t),\xi(t)]_G}{Dt}=\left[q(t),\dot{\xi}(t)-[\mathcal{A}(\dot{q}(t)),\xi(t)] \right]_G.$$
In addition, the curvature of $\mathcal{A}$ can be seen as a 2-form on $Q/G$ with values in the adjoint bundle as for any $X,Y\in T_x(Q/G)$
$$\tilde{B}(X,Y)=[q,B(\mathrm{Hor}^{\mathcal{A}}_qX,\mathrm{Hor}^{\mathcal{A}}_qY)]_G.$$ 

The connection $\mathcal{A}$ induces a well-known vector bundle isomorphism
\begin{eqnarray}
\alpha_{\mathcal{A}}:TQ/G &\longrightarrow  &T(Q/G)\oplus \mathrm{Ad}Q \label{identification}\\
\left[v_q \right]_G  & \mapsto & T_q\pi_{Q/G,Q}(v_q)\oplus [q,\mathcal{A}(v_q)]_G  \notag
\end{eqnarray}
used in Mechanics to reduce $G$-invariant Lagrangians defined on $TQ$. This kind of reduction is called Lagrange--Poincar\'e reduction and it is described below.

\subsection{Quotient of vector bundles.} \label{quotientvector} Given a vector bundle $V\to Q$, we say that an action $\rho$ of a Lie group $G$ on $V$ is a vector bundle action if for all $g\in G$, $\rho_g:V\to V$ are  vector bundle isomorphisms and the action induced on $Q$ is free and proper. Then, there is a vector bundle structure on $V/G\to Q/G$ with operations
 $$[v_q]_G+[w_q]_G=[v_q+w_q]_G \text{ and } \lambda [v_q]_G=[\lambda v_q]_G,$$ where $[v_q]_G,[w_q]_G\in V/G$ stand for the equivalence classes of $v_q,w_q\in V_q$ and $\lambda \in \mathbb{R}$. In the  diagram
\begin{center}
\begin{tikzpicture}[scale=1.5] \label{quo.vect.ble}
\node (0) at (0,1) {$V$};
\node (A) at (1.5,1) {$V/G$};
\node (B) at (1.5,0) {$Q/G$};
\node (C) at (0,0) {$Q$};
\draw[->,font=\scriptsize,>=angle 90]
(0) edge node[above]{$\pi_{V/G,V}$} (A)
(0) edge node[right]{$\pi_{Q,V}$} (C)
(C) edge node[above]{$\pi_{Q/G,Q}$} (B)
(A) edge node[right]{$\pi_{Q/G,V/G}$} (B);
\end{tikzpicture}
\end{center}
 $\pi_{Q,V}$ and $\pi_{Q/G,V/G}$ are projections of vector bundles, $\pi_{Q/G,Q}$ is the projection of a $G$-principal bundle and  $\pi_{V/G,V}$ is a surjective vector bundle homomorphism.
 
Suppose that $V\to Q$ has an affine connection $\nabla$ or, equivalently, a covariant derivative $Dv(t)/Dt$ of curves $v(t)$ in $V$. Let $q(t)$ be a curve in $Q$ and denote $q_0=q(t_0)$ a fixed value of the curve, the \textit{horizontal lift} of $q(t)$ through $v\in V_{q_0}$ at $t_0$ is an horizontal curve in $V$ (that is, its covariant derivative vanishes) denoted $q^h_v(t)$ such that $\pi_{Q,V}\circ q^h_v=q$ and $q^h_v(t_0)=v$. 

Let $v(t)$ be a curve in $V$, $q(t)=\pi_{Q,V}(v(t))$, $q_0=q(t_0)$ and $x(t)=\pi_{Q/G,Q}(q(t))$. If $q_h(t)$ is the horizontal lift in $Q$ of $x(t)$ with respect to $\mathcal{A}$, we define the $g_{q_0}(t)$ in $G$ such that
$$q(t)=g_{q_0}(t)q_h(t).$$
Consider the curve $v_h(t)=g_{q_0}^{-1}(t)v(t)$ in $V$, then 
\begin{align*}
\frac{D}{Dt}\biggr\rvert_{t=t_0}v(t)=\frac{D}{Dt}\biggr\rvert_{t=t_0}g_{q_0}(t)v_h(t)=\frac{D}{Dt}\biggr\rvert_{t=t_0}g_{q_0}(t)v_h(t_0)+\frac{D}{Dt}\biggr\rvert_{t=t_0}g_{q_0}(t_0)v_h(t)\\
=\frac{D}{Dt}\biggr\rvert_{t=t_0}g_{q_0}(t)v(t_0)+\frac{D}{Dt}\biggr\rvert_{t=t_0}v_h(t)=\dot{g}_{q_0}(t_0)_{v(t_0)}^V+\frac{D}{Dt}\biggr\rvert_{t=t_0}v_h(t)
\end{align*}
Thus, the covariant derivative $Dv(t)/Dt$  at $t_0$ can be decomposed in horizontal and vertical components
$$\frac{D^{(\mathcal{A},H)}}{Dt}\biggr\rvert_{t=t_0}v(t)=\frac{D}{Dt}\biggr\rvert_{t=t_0}v_h(t), \hspace{10mm} \frac{D^{(\mathcal{A},V)}}{Dt}\biggr\rvert_{t=t_0}v(t)=\dot{g}_{q_0}(t_0)_{v(t_0)}^V.$$
Consequently, given $X\in\mathfrak{X}(Q)$ and $v\in\Gamma(Q,V)$ we can define
\begin{equation}
\label{2covariant}
\nabla_X^{(\mathcal{A},H)}v(q_0)=\frac{D^{(\mathcal{A},H)}}{Dt}\biggr\rvert_{t=t_0}v(t), \hspace{10mm} \nabla_X^{(\mathcal{A},V)}v(q_0)=\frac{D^{(\mathcal{A},V)}}{Dt}\biggr\rvert_{t=t_0}v(t),
\end{equation}
where $v(t)=v(q(t))$ and $q(t)$ is an integral curve of $X$ in $Q$ such that $q_0=q(t_0)$.

Let $X=Y\oplus \bar{\xi} \in \mathfrak{X}(Q/G)\oplus\Gamma(\mathrm{Ad}Q)\simeq \Gamma (TQ/G)$, using the identification \eqref{identification}. There is a unique $G$-invariant vector field $\bar{X}\in\Gamma^G(TQ)$  on $Q$ projecting to $X$. Furthermore, $\bar{X}=Y^h\oplus W$ with $Y^h \in\mathfrak{X}(TQ)$ the horizontal lift of $Y$ and $W$ the unique vertical $G$-invariant vector field such that for all $x\in Q/G$, $\bar{\xi}(x)=[q,\mathcal{A}(W(q))]_G$ with $q\in \pi_{Q/G,Q}^{-1}(x)$. Then, for $[v]_G\in \Gamma(Q/G,V/G)$ with $v\in\Gamma^G(Q,V)$ a $G$-invariant section, we define the \textit{quotient connection} by
$$\left[ \nabla^{(\mathcal{A})}\right] _{G,Y\oplus \bar{\xi}}[v]_G=[\nabla_{\bar{X}}v]_G,$$
the \textit{horizontal quotient connection} is defined by
$$\left[ \nabla^{(\mathcal{A},H)}\right] _{G,Y\oplus \bar{\xi}}[v]_G=[\nabla_{Y^h}v]_G=[\nabla^{(\mathcal{A},H)}_{\bar{X}}v]_G$$
and the \textit{vertical quotient connection} is defined by
$$\left[ \nabla^{(\mathcal{A},V)}\right] _{G,Y\oplus \bar{\xi}}[v]_G=[\nabla_{W}v]_G=[\xi_v^V]_G,$$
where $\xi$ satisfies $\bar{\xi}=[\pi_{Q,V}(v),\xi]_G.$ Note that these are not connections in the usual sense as derivation is performed with respect to sections of $TQ/G$ instead of sections of $T(Q/G)$. Only the horizontal quotient connection can be thought as a usual connection  since it only depends on $T(Q/G)\subset TQ/G$.

\subsection{The $\mathfrak{LP}$ category.} In Lagrange--Poincar\'e reduction the original Lagrangian $L$ is defined on $TQ$, the tangent bundle of the configuration space $Q$. However, the reduced Lagrangian is defined on $TQ/G\cong T(Q/G)\oplus \mathrm{Ad}Q$ which needs not be a tangent bundle. To iterate Lagrange--Poincar\'e reduction, the category $\mathfrak{LP}$ of Lagrange--Poincar\'e bundles was introduced in \cite{CMR}. Of course, this category, includes $TQ/G$ and is stable under reduction. 

The objects of $\mathfrak{LP}$ are vector bundles $TQ\oplus V\to Q$ where $TQ\to Q$ is the tangent bundle of a manifold $Q$, and $V\to Q$ is a vector bundle with the following additional structure:
\begin{enumerate}
\item \label{LP1a} a Lie bracket $[,]$ in the fibers of $V$;
\item \label{LP1b} a $V$-valued 2-form $\omega$ on $Q$;
\item \label{LP1c} a linear connection $\nabla$ on $V$;
\item \label{LP1d} and the bilineal operator defined by
$$[X_1\oplus w_1,X_2\oplus w_2]=[X_1,X_2]\oplus(\nabla_{X_1}w_2-\nabla_{X_2}w_1-\omega(X_1,X_2)+[w_1,w_2]),$$
where $[X_1,X_2]$ denotes the Lie bracket of vector fields and $[w_1,w_2]$ denotes the Lie bracket in the fibers of $V$, is a Lie bracket on sections $X\oplus w\in \Gamma(TQ\oplus V)$.
\end{enumerate}
We shall not detail the morphisms of $\mathfrak{LP}$ in this paper, instead they can be found in \cite{CMR}. Given a Lagrangian $L:TQ\oplus V\to\R$ defined on al element of $\mathfrak{LP}$, a curve $\dot{q}(t)\oplus v(t):[t_0,t_1]\to TQ\oplus V$ is said to be critical if and only if 
$$0=\left.\frac{d}{d\varepsilon}\right|_{\varepsilon =0}\int_{t_0}^{t_1} L(\dot{q}_{\varepsilon}(t)\oplus v_{\varepsilon}(t)) dt,$$
where $\dot{q}_{\varepsilon}(t)\oplus v_{\varepsilon}(t)$ is a variation of $\dot{q}(t)\oplus v(t)$ such that $\delta\dot{q}$ is the lifted variation of a free variation $\delta q$ and
 $$\delta v=\frac{Dw}{dt}+[v,w]+\omega_q(\delta q,\dot{q}),$$
where $w(t)$ is a curve in $V$ with $w(t_0)=w(t_1)=0$ and $\pi_{Q,V}(w(t))=q(t)$. This variational principle for Lagrangians is equivalent to the Lagrange--Poincar\'e equations
\begin{equation}
\frac{\delta L}{\delta q}-\frac{D}{Dt}\frac{\delta L}{\delta \dot{q}}=\left\langle\frac{\delta L}{\delta v},\omega_q(\dot{q},\cdot)\right\rangle ,
\end{equation}
\begin{equation}
\mathrm{ad}^*_{v}\frac{\delta L}{\delta v}=\frac{D}{Dt}\frac{\delta L}{\delta v},
\end{equation}
where $\mathrm{ad}^*$ stands for the coadjoint action in $V^*\to Q$.

\subsection{Reduction of $\mathfrak{LP}$ bundles.} Reduction in the Lagrange-Poincar\'e category is as follows:
\begin{proposition} \label{quotientLP}\emph{\cite[\S 6.2]{CMR}}
Let $TQ\oplus V \to Q$ be an object of $\mathfrak{LP}$ with additional structure $[,]$, $\omega$ and $\nabla$. Let $\rho:G\times (TQ\oplus V)\to TQ\oplus V$ be a free and proper action in the category $\mathfrak{LP}$ (for all $g\in G$, $\rho_g$ is an isomorphism in $\mathfrak{LP}$) and $\mathcal{A}$ a principal connection on $Q\to Q/G$. Then, the vector bundle
$$T(Q/G)\oplus\mathrm{Ad} Q\oplus(V/G)$$
with additional structures $[,]^{\tilde{\mathfrak{g}}}$, $\omega^{\tilde{\mathfrak{g}}}$ and $\nabla^{\tilde{\mathfrak{g}}}$ in $\mathrm{Ad} Q\oplus(V/G)$ given by
\begin{align*}
\nabla^{\tilde{\mathfrak{g}}}_X(\bar{\xi}\oplus[v]_G)=&\nabla^{\mathcal{A}}_X\bar{\xi}\oplus\left([\nabla^{(\mathcal{A},H)}]_{G,X}[v]_G-[\omega]_G(X,\bar{\xi}) \right),\\
\omega^{\tilde{\mathfrak{g}}}(X_1,X_2)=&\tilde{B}(X_1,X_2)\oplus [\omega]_G(X_1,X_2), \\
\,[\bar{\xi}_1\oplus[v_1]_G,\bar{\xi}_2\oplus[v_2]_G]^{\tilde{\mathfrak{g}}} =& [\bar{\xi}_1,\bar{\xi}_2] \oplus \left([\nabla^{(\mathcal{A},V)}]_{G,\bar{\xi}_1} [v_2]_G\right. \\ &- \left. [\nabla^{(\mathcal{A},V)}]_{G,\bar{\xi}_2}[v_1]_G
-[\omega]_G(\bar{\xi}_1,\bar{\xi}_2)+[[v_1]_G,[v_2]_G]_G  \right)
\end{align*}
is an object of the $\mathfrak{LP}$ category called the reduced bundle with respect to the group $G$ and the connection $\mathcal{A}$.
\end{proposition}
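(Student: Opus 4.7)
The plan is to realize the reduced bundle as a quotient and transport all structures via $G$-invariance. Combining the isomorphism $\alpha_{\mathcal{A}}$ from \eqref{identification} with the vector bundle quotient $V\to V/G$ gives an identification
$$(TQ\oplus V)/G\;\cong\;T(Q/G)\oplus \mathrm{Ad}Q\oplus (V/G),$$
under which a section $Y\oplus\bar{\xi}\oplus[v]_G$ lifts to the unique $G$-invariant section $\bar{X}\oplus v=(Y^h+W)\oplus v$ of $TQ\oplus V$, with $W$ the vertical $G$-invariant vector field satisfying $\bar{\xi}=[q,\mathcal{A}(W(q))]_G$. The strategy is to check that all three reduced structures are induced from the original ones through this lift, after which the $\mathfrak{LP}$ axioms upstairs force the $\mathfrak{LP}$ axioms downstairs.

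First I would verify that each of $\nabla^{\tilde{\mathfrak{g}}}$, $\omega^{\tilde{\mathfrak{g}}}$, $[,]^{\tilde{\mathfrak{g}}}$ is well-defined and of the right tensorial type. The linear connection $\nabla^{\tilde{\mathfrak{g}}}$ inherits the Leibniz rule from $\nabla^{\mathcal{A}}$ on $\mathrm{Ad}Q$ and from the horizontal quotient connection of Section~\ref{quotientvector}; the correction $[\omega]_G(X,\bar{\xi})$ is tensorial in both arguments because $\omega$ is a tensor and $G$-equivariant (as $\rho_g$ is an $\mathfrak{LP}$-morphism). The 2-form $\omega^{\tilde{\mathfrak{g}}}$ is skew and tensorial, its summands being the reduced curvature $\tilde{B}$ of $\mathcal{A}$ and the quotient of the horizontal restriction of $\omega$. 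The fiberwise bracket $[,]^{\tilde{\mathfrak{g}}}$ is antisymmetric by inspection, and Jacobi is checked summand by summand using Jacobi for $[,]$ on $\mathrm{Ad}Q$ and on the fibers of $V$.

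The heart of the proof is axiom~\ref{LP1d}: the induced bracket on sections built from $\nabla^{\tilde{\mathfrak{g}}}$, $\omega^{\tilde{\mathfrak{g}}}$, $[,]^{\tilde{\mathfrak{g}}}$ must be a Lie bracket. I would verify this by showing it agrees with the push-down of the bracket of $G$-invariant sections of $TQ\oplus V$. Expanding
$$[\bar{X}_1\oplus v_1,\bar{X}_2\oplus v_2]=[\bar{X}_1,\bar{X}_2]\oplus\bigl(\nabla_{\bar{X}_1}v_2-\nabla_{\bar{X}_2}v_1-\omega(\bar{X}_1,\bar{X}_2)+[v_1,v_2]\bigr)$$
with $\bar{X}_i=Y_i^h+W_i$, the purely horizontal bracket $[Y_1^h,Y_2^h]$ decomposes into $[Y_1,Y_2]^h$ plus a vertical contribution encoded by $\tilde{B}(Y_1,Y_2)$ via Cartan's structure equation, while the mixed brackets $[Y_i^h,W_j]$ and $[W_1,W_2]$ reproduce $\nabla^{\mathcal{A}}_Y\bar{\xi}$ and $[\bar{\xi}_1,\bar{\xi}_2]$ on $\mathrm{Ad}Q$. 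Analogously, $\nabla_{\bar{X}}v$ splits into horizontal and vertical quotient connections, and $\omega(\bar{X}_1,\bar{X}_2)$ splits into horizontal--horizontal, horizontal--vertical and vertical--vertical pieces producing $[\omega]_G(X_1,X_2)$, the $[\omega]_G(X,\bar{\xi})$ corrections in $\nabla^{\tilde{\mathfrak{g}}}$, and $[\omega]_G(\bar{\xi}_1,\bar{\xi}_2)$ in $[,]^{\tilde{\mathfrak{g}}}$, respectively. Collecting terms shows the induced bracket matches the LP1d formula exactly; Jacobi then comes for free from Jacobi upstairs on invariant sections.

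The main obstacle is the bookkeeping in this matching step: the curvature $\tilde{B}$ has to emerge from the correct horizontal bracket, the cross terms $\omega(Y_i^h,W_j)$ must collapse precisely into the $[\omega]_G(X,\bar{\xi})$ correction inserted in the definition of $\nabla^{\tilde{\mathfrak{g}}}$, and the vertical-quotient-connection contributions must combine with $[v_1,v_2]$ to give the $V/G$-part of $[,]^{\tilde{\mathfrak{g}}}$. Once this identification is secured, $G$-invariance of the original $\mathfrak{LP}$ structures and the fact that invariant sections form a Lie subalgebra of $\Gamma(TQ\oplus V)$ close the argument.
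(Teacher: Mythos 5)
Your outline is correct and follows essentially the standard argument: the paper itself gives no proof of this proposition but quotes it from \cite[\S 6.2]{CMR}, and the proof there proceeds exactly as you describe, by identifying sections of the reduced bundle with $G$-invariant sections of $TQ\oplus V$ via $\bar X=Y^h+W$ and matching the descended bracket of invariant sections, term by term (curvature from $[Y_1^h,Y_2^h]$, covariant derivative on $\mathrm{Ad}Q$ from the mixed brackets, and the horizontal/vertical splitting of $\nabla$ and $\omega$), with the formula in axiom~\ref{LP1d}. The bookkeeping you flag as the main obstacle does close up as you expect, and the Jacobi identity is indeed inherited from the Lie subalgebra of invariant sections upstairs.
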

Suppose that $L:TQ\oplus V\to\R$ is $G$-invariant. Taking into account the identification \eqref{identification}  we can drop $L$ to a reduced Lagrangian 
$$l:T(Q/G)\oplus\mathrm{Ad} Q\oplus(V/G)\to\R .$$
Denote by $\pi_G$ the projection of $TQ\oplus V \to (TQ\oplus V)/G$ and $\alpha^{TQ\oplus V}_\mathcal{A}$ the identification between $(TQ\oplus V)/G$ and $T(Q/G)\oplus\mathrm{Ad} Q\oplus(V/G)$. A curve $\dot{q}(t)\oplus v(t)$ is critical for the variational problem set by $L$ if and only if the curve 
$$\dot{x}(t)\oplus \bar{\xi}(t) \oplus [v]_G(t)=\alpha^{TQ\oplus V}_\mathcal{A}\circ \pi_G (\dot{q}(t)\oplus v(t)),$$ 
is critical for the variational problem set by $l$ (see \cite{BC}). Equivalently, $\dot{q}(t)\oplus v(t)$ solves the Lagrange-Poincar\'e equations given by $L$ in $TQ\oplus V$ if and only if $\dot{x}(t)\oplus \bar{\xi}(t) \oplus [v]_G(t)$ solves the Lagrange--Poincar\'e equations given by $l$ in $T(Q/G)\oplus\mathrm{Ad} Q\oplus(V/G)$.
Hence, reduction can be made in this category and the procedure can be iterated: if we reduce by $N$, a normal subgroup of $G$ and afterward by $K = G/N$, the final result is equivalent to a direct reduction by $G$, whenever   the auxiliary connections used along the process are conveniently chosen.

\section{Lagrange--Poincar\'e field equations} \label{fiieldth}

Let $\pi_{X,P}:P\to X$ be a (non-necessarily principal) fiber bundle. Two local sections $\rho:U\to P$ and $\rho':U'\to P$ represent the same 1-jet, $j^1_x\rho$ at $x\in U\cap U'$ if and only if $\rho(x)=\rho'(x)$ and $T_x\rho=T_x\rho'$. This defines an equivalence relation and we denote by $J^1_xP$ the space of such classes. The \textit{1-jet bundle} is the space  $J^1P=\cup_{x\in X}J^1_xP$ equipped with a natural smooth structure of fiber bundle over $P$ with projection $j^1_x\rho\in J^1P\mapsto \rho(x)\in P$. The bundle $J^1P\to P$ is affine and modeled over the vector bundle $T^*X\otimes_PVP,$ where the abuse of notation $T^*X=\pi^*_{X,P}T^*X$ has been used.

A first order Lagrangian density is a smooth fiber map $\mathcal{L}:J^1P\to\bigwedge^n T^*X$, where $n=\mathrm{dim}(X)$. Suppose that $X$ is oriented and $\mathrm{Vol}\in\Gamma(\bigwedge^n T^*X)$ is a volume form, then the function $L: J^1P\to \R$ such that $\mathcal{L}=L\mathrm{Vol}$ is called a Lagrangian. A section $\rho$ of $P\to X$ is a critical section for the variatonal problem defined by $\mathcal{L}=L\mathrm{Vol}$ if
\begin{equation*}
\frac{d}{d\varepsilon}\biggr\rvert_{\varepsilon=0}\int_{X}\mathcal{L}(j^1\rho_{\varepsilon})=0
\end{equation*}
for all compactly supported variations $\rho_{\varepsilon}$ of $\rho$ that are vertical, that is for all $x\in X$, $d\rho_{\varepsilon}(x)/d\varepsilon\rvert_{\varepsilon=0}\in V_{\rho(x)}P$. This variational principle is equivalent to the fact that $\rho(x)$ satisfies the Euler--Lagrange equations, which can be written in an implicit way as
$$\frac{\delta L}{\delta\rho}-\mathrm{div}^{P}\frac{\delta L}{\delta j^1\rho}=0,$$
where $\delta L/\delta j^1\rho\in TX\otimes VP^*$ is the fiber derivative in $J^1P$, $\mathrm{div}^{P}$ is defined for $VP^*$-valued fields using a connection $\nabla^P$ in $(VP\subset TP)\to P$ and $\delta L/\delta\rho$ the horizontal differential with respect to $\nabla^P$.

Let $\Phi:G\times P\to P$ be a free and proper action such that for all $g\in G$, $\pi_{X,P}\circ\Phi_g=\pi_{X,P}$. Then $P\to\Sigma=P/G$ is a $G$-principal bundle and the action in $P$ can be lifted to $J^1P$. This defines $\pi_G:J^1P\to(J^1P)/G$ and, according to \cite{LP}, once fixed a connection $\mathcal{A}$ in $P\to\Sigma$, there is an identification
\begin{align}
\alpha_{\mathcal{A}}:(J^1P)/G\to &J^1\Sigma\oplus(T^*X\otimes_{\Sigma}\mathrm{Ad}P) \label{identification2}\\
j^1\rho\mapsto &j^1\sigma\oplus[\rho,\rho^*\mathcal{A}]_G \notag
\end{align}
where $\sigma=\pi_{\Sigma,P}\circ\rho = [\rho]_G$. We shall denote $\bar{\rho}=[\rho,\rho^*\mathcal{A}]_G$.Given a $G$-invariant Lagrangian $L:J^1P\to\R$ is a $G$-invariant Lagrangian, and its reduced Lagrangian $l:J^1\Sigma\oplus(T^*X\otimes_{\Sigma}\mathrm{Ad}P)\to\R$, the main result in \cite{LP} states that the variational principle defined by $L$ is equivalent to the fact that the reduced section $j^1\sigma\oplus\bar{\rho}(x)$ satisfies the Lagrange-Poincar\'e equations;
\begin{equation*}
\mathrm{ad}^*\frac{\delta l}{\delta\bar{\rho}}-\mathrm{div}^{\nabla}\frac{\delta l}{\delta\bar{\rho}}=0,
\end{equation*}
\begin{equation*}
\frac{\delta l}{\delta\sigma}-\mathrm{div}^{\Sigma}\frac{\delta l}{\delta j^1\sigma}=\langle\frac{\delta l}{\delta\bar{\rho}},i_{T_{\sigma}}\tilde{B}\rangle.
\end{equation*}
This is, in turn, equivalent to the variational principle
\begin{equation*}
\frac{d}{d\varepsilon}\biggr\rvert_{\varepsilon=0}\int_{X}l(j^1\sigma_{\varepsilon}\oplus\bar{\rho}_{\varepsilon})\mathrm{Vol}=0
\end{equation*}
for variations $j^1\sigma_{\varepsilon}\oplus\bar{\rho}_{\varepsilon}$ such that $\delta\bar{\rho}=\nabla^{\mathcal{A}}\bar{\eta}-[\bar{\eta},\bar{\rho}]-\tilde{B}(\delta\sigma,T\sigma)$, where $\delta\sigma$ is an arbitrary vertical variation of $\sigma$, $\bar{\eta}$ is an arbitrary section of $\mathrm{Ad}P\to X$ such that $\pi_{\Sigma, \mathrm{Ad}P}\bar{\eta}=\sigma$ and $\nabla^{\mathcal{A}}$ is the connection on $\mathrm{Ad}P$ induced by $\mathcal{A}$ and defined \S\ref{preliminaries}. This procedure is called Lagrange--Poincar\'e reduction for field theoretical covariant Lagrangians.

The attentive reader may have noticed that $\nabla^{\mathcal{A}}$ is a connection on $\mathrm{Ad}P\to \Sigma$ and, consequently, acts on sections of $\mathrm{Ad}P\to \Sigma$ while $\bar{\eta}$ is a section of $\mathrm{Ad}P\to X$. We shall now explain how to extend a connection to derive this kind of sections. Let $V\to P$ be a vector bundle with connection $\nabla$ and $P\to X$ a fiber bundle, given  $f:X\to V$ a section of $V\to X$, define the $\nabla$-derivative of $f$ with respect to $u_x\in T_xX$ as 
$$\tilde{\nabla}_{u_x}f=\frac{D^{\nabla}}{Dt}\biggr\rvert_{t=0}f(c(t))\in V_{\pi_{P,V}f(x)},$$
where $c(t)$ is a curve in $X$ such that $\dot{c}(0)=u_x$, $D^{\nabla}/Dt$ is the usual covariant derivative associated to $\nabla$ and $t\mapsto f(c(t))$ is a curve in $V$. As $f$ is a section on $V\to X$, $$\mathrm{id}_X=\pi_{X,V}\circ f=\pi_{X,P}\circ\pi_{P,V}\circ f$$ and $\rho=\pi_{P,V}\circ f$ is a section of $P\to X$. Then,
$f(c(t))=f\circ\pi_{X,P}\circ\rho(c(t))$ projects to curve $\rho(c(t))$ in $P$ with $D\rho(c(t))/dt\vert_{t=0}=T_x\rho(u_x)$ and
$$\tilde{\nabla}_{u_x}f=\frac{D^{\nabla}}{Dt}\biggr\rvert_{t=0}f(c(t))=\nabla_{T_x\rho(u_x)}\bar{f},$$ 
where $\bar{f}$ is a local section of $V\to P$ around $\rho(x)$ such that $\bar{f}\vert_{\mathrm{im}(\rho(X))}=f\circ\pi_{X,P}$. Sometimes we will use the abuse of notation $f=\bar{f}$.
\section{The FT$\mathfrak{LP}$ category}\label{FTLP}

We now define the category of bundles where reduction by stages for Field t
Theoretical covariant Lagrangians can be performed.

\begin{definition} Given $X$ a manifold called base space, the category $FT\mathfrak{LP}(X)$ of field theoretical Lagrange--Poincar\'e bundles over $X$ is defined as follows:
\begin{enumerate}[(A)]
\item The \textbf{objects} of  $FT\mathfrak{LP}(X)$ are bundles of the form $J^1P\oplus(T^*X\otimes_P V) \to P$, where $\pi_{XP}:P\to X$ is a bundle not necessarily principal, $T^*X\to P$ is an abuse of notation for the pullback $\pi^*_{XP}T^*X\to P$, and $V\to P$ is a vector bundle which is the vectorial part of an $\mathfrak{LP}$-bundle. In other words, $TP\oplus V \to P$ is an $\mathfrak{LP}$-bundle, which in turn is equivalent to the existence of
\begin{enumerate}[(i)]
\item a Lie bracket, $[,]$, in the fibers of $V$;
\item a $V$-valued 2-form $\omega$ on $P$;
\item a connection, $\nabla$, on $V\to P$;
\item a Lie bracket operation on the sections $Z\oplus u\in\Gamma(TP\oplus V)$ defined by 
$$[Z_1\oplus u_1,Z_2\oplus u_2]=[Z_1,Z_2]\oplus\nabla_{Z_1}u_2-\nabla_{Z_2}u_1-\omega(Z_1,Z_2)+[u_1,u_2]$$
\end{enumerate}
\item Let $J^1P_1\oplus(T^*X\otimes_{P_1} V_1) \to P_1$ and $J^1P_2\oplus(T^*X\otimes_{P_2} V_2) \to P_2$ be two  field theoretical Lagrange--Poincar\'e bundles over $X$ with structures $[,]_i$,$\nabla_i$ and $\omega_i$ on $V_i\to P_i$, $i=1,2$. A \textbf{morphism}, $f:J^1P_1\oplus(T^*X\otimes_{P_1} V_1) \to J^1P_2\oplus(T^*X\otimes_{P_2} V_2)$ is a bundle map covering $f_0:P_1\to P_2$ that satisfies
\begin{enumerate}[(i)]
\item $f_0:P_1\to P_2$ is a bundle map between $P_1\to X$ and $P_2\to X$ covering the identity on $X$,
\item $f(J^1P_1)\subset J^1P_2$ and $f_{\vert J^1P_1}=j^1f_0$, the 1-jet extension of $f_0$,
\item $f(T^*X\otimes_{P_1} V_1)\subset T^*X\otimes_{P_2} V_2$ and $f_{\vert T^*X\otimes_{P_1} V_1}=\mathrm{id}_{f_0}\otimes \bar{f}$, where 
\begin{align*}
\mathrm{id}_{f_0}\otimes \bar{f}:T^*X\otimes_{P_1} V_1 \to& T^*X\otimes_{P_2} V_2\\
(p_1,\alpha)\otimes v \mapsto & (f_0(p_1),\alpha)\otimes \bar{f}(v)
\end{align*}
$(p_1,\alpha)\in(\pi^*_{XP}T^*X)_{p_1}$ and $\bar{f}:V_1\to V_2$ is a vector bundle morphism covering $f_0$ and commuting with the structures on $V_i$ given by $[,]_i$,$\nabla_i$ and $\omega_i$ on $V_i\to P_i$, $i=1,2$. More explicitly, given $u, u' \in (V_1)_{p_1}$,
$Z, Z' \in (TP_1)_{p_1}$ and a curve $v(t)$ in $V_1$;
$$\bar{f}([u,u']_1)=[\bar{f}(u),\bar{f}(u')]_2,$$
$$\bar{f}(\omega_1(Z,Z'))=\omega_2(Tf_0(Z),Tf_0(Z')),$$
and
$$\bar{f}\left( \frac{D_1 v(t)}{Dt}\right)=\frac{D_2 \bar{f}(v(t))}{Dt},$$
are satisfied.
\end{enumerate}
\end{enumerate}
\end{definition} 
\begin{remark}
\em There are several special cases of objects in $FT\mathfrak{LP}(X)$ appearing in the present bibliography. For $V=0$, we obtain $1$-jet bundles used in Lagrangian covariant Field Theory (for example, see \cite{gotay}). Another instance of object in $FT\mathfrak{LP}(X)$ is the quotient of a $1$-jet bundle $J^1P$, by a proper and free lifted action of a Lie group $G$ on $P$ found to be isomorphic to $J^1(P/G)\oplus (T^*X\otimes_{(P/G)}\mathrm{Ad} P)$ in \cite{LP}. In the case where $P\to X$ is $G$-principal bundle, the quotient $J^1P/G$ is a $FT\mathfrak{LP}(X)$ bundle of the form $T^*X\otimes\mathrm{Ad} P$ which is the vector bundle underlying the affine bundle of connections used in \cite{marco} to perform Euler--Poincar\'e reduction.

Finally, the particular case when $X=\R$ and $P=\R\times Q$, with $Q$ a manifold, gives the $\mathfrak{LP}$ bundle
\[
T(\R\times Q)\oplus\mathrm{Ad}(\R\times Q)\simeq \mathbb{R} \times (TQ\oplus \mathrm{Ad}P)
\]
appearing when reducing time-dependent Lagrangians in classical Mechanics.   \em
\end{remark}

There exists a way of thinking bundles in $FT\mathfrak{LP}(X)$ as regular $\mathfrak{LP}$ bundles. First, we start by defining relevant subcategories of $\mathfrak{LP}$ bundles.

\begin{definition} Given $X$ a manifold called base space, we define the subcategory $\mathfrak{LP}(X)$ of $\mathfrak{LP}$ whose objects are $\mathfrak{LP}$ bundles $TP\oplus V$, such that $P$ is the total space of a fiber bundle $P \to X$, and whose morphisms are $\mathfrak{LP}$ morphisms, $f=T{f_0}\oplus\bar{f}: TP_1\oplus V_1\to TP_2\oplus V_2,$
such that $f_0:P_1\to P_2$ is a bundle map over $X$, that means, $f_0$ covers $\mathrm{id}_X$.
\end{definition}

\begin{proposition}\label{catequiv}
The applications
\begin{align*}
\mathcal{F}:\mathfrak{LP}(X)\to &FT\mathfrak{LP}(X)\\
TP\oplus V \mapsto &J^1P\oplus(T^*X\otimes V),
\end{align*}
\begin{align*}
\mathcal{F}:\mathrm{Hom}(\mathfrak{LP}(X))\to &\mathrm{Hom}(FT\mathfrak{LP}(X))\\
Tf_0\oplus\bar{f} \mapsto &j^1(f_0)\oplus(\mathrm{id}_{f_0}\otimes\bar{f}),
\end{align*}
define a covariant functor with inverse. Thence, the categories $\mathfrak{LP}(X)$ and $FT\mathfrak{LP}(X)$ are isomorphic.
\end{proposition}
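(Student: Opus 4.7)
The plan is to verify functoriality and then exhibit an explicit inverse, observing that the correspondence is essentially tautological once the definitions are unpacked. Since the structural data on $V$ (namely $[,]$, $\omega$, $\nabla$) that characterises an object of $FT\mathfrak{LP}(X)$ is precisely the data that makes $TP\oplus V$ an object of $\mathfrak{LP}$, the object map of $\mathcal{F}$ is well-defined without any extra checks; it sends the underlying fiber bundle $P\to X$ and the vector bundle $V\to P$ with structure $([,],\omega,\nabla)$ to the pair $(J^1P,T^*X\otimes V)$ equipped with the same data on $V$.

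Next I would verify that $\mathcal{F}$ sends morphisms to morphisms. Given $f = Tf_0\oplus\bar{f}:TP_1\oplus V_1\to TP_2\oplus V_2$ in $\mathfrak{LP}(X)$, the map $f_0:P_1\to P_2$ is a bundle map over $X$, so condition (i) holds; the 1-jet prolongation $j^1 f_0:J^1P_1\to J^1P_2$ is an intrinsic construction whenever $f_0$ is fiber-preserving over $X$, which gives (ii); and $\mathrm{id}_{f_0}\otimes\bar{f}$ is literally the construction required in (iii). The compatibility conditions on $\bar{f}$ with $[,]_i$, $\omega_i$, $\nabla_i$ are identical in the two categories because these are all conditions on the vector bundle part, which is unchanged. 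Functoriality is then immediate: $j^1(\mathrm{id}_P)=\mathrm{id}_{J^1P}$, $\mathrm{id}_{\mathrm{id}_P}\otimes\mathrm{id}_V=\mathrm{id}_{T^*X\otimes V}$, and both $j^1(-)$ and $\mathrm{id}_{(-)}\otimes(-)$ respect composition.

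To produce an inverse functor $\mathcal{G}:FT\mathfrak{LP}(X)\to\mathfrak{LP}(X)$, I would send $J^1P\oplus(T^*X\otimes V)\to P$ to $TP\oplus V\to P$, retaining the same $([,],\omega,\nabla)$ on $V$; this is an $\mathfrak{LP}(X)$ object by definition of $FT\mathfrak{LP}(X)$. On morphisms I would send $f=j^1(f_0)\oplus(\mathrm{id}_{f_0}\otimes\bar{f})$ to $Tf_0\oplus\bar{f}$. This is well-defined because every $FT\mathfrak{LP}(X)$ morphism is, by definition, forced to have exactly this form with $f_0$ covering $\mathrm{id}_X$ and $\bar{f}$ compatible with the structures, and these are exactly the data defining an $\mathfrak{LP}(X)$ morphism.

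Finally, the equalities $\mathcal{G}\circ\mathcal{F}=\mathrm{id}_{\mathfrak{LP}(X)}$ and $\mathcal{F}\circ\mathcal{G}=\mathrm{id}_{FT\mathfrak{LP}(X)}$ hold on the nose from the explicit formulas. The only conceptual subtlety, and the step I would be most careful about, is the well-definedness of $\mathcal{G}$ on morphisms: one must check that the decomposition of a $FT\mathfrak{LP}(X)$ morphism into $f_0$ and $\bar{f}$ is unique, but this follows because $f_0$ is recovered as the base map of $f$ on $P$ and $\bar{f}$ is the fiberwise vector bundle map extracted from $\mathrm{id}_{f_0}\otimes\bar{f}$ via any nonzero covector. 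No deep obstacle arises; the proposition is essentially a repackaging result asserting that the extra data of $J^1P$ and the $T^*X$ factor add no information beyond that of $TP\oplus V$ over a bundle $P\to X$.
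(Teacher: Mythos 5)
Your proposal is correct and follows essentially the same route as the paper: verify functoriality directly (identities and composition, using $j^1(g\circ f)=j^1g\circ j^1f$) and exhibit the explicit inverse $\mathcal{G}$ sending $J^1P\oplus(T^*X\otimes V)$ back to $TP\oplus V$ and $j^1(f_0)\oplus(\mathrm{id}_{f_0}\otimes\bar f)$ to $Tf_0\oplus\bar f$. If anything, you are slightly more careful than the paper, which leaves implicit the check that $\mathcal{F}$ lands in $\mathrm{Hom}(FT\mathfrak{LP}(X))$ and the uniqueness of the decomposition of an $FT\mathfrak{LP}(X)$ morphism into $(f_0,\bar f)$.
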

\begin{proof}
Let $TP\oplus V\in\mathfrak{LP}(X)$, $$\mathcal{F}(\mathrm{id}_{TP\oplus V})=\mathcal{F}(T\mathrm{id}_P\oplus\mathrm{id}_V)=j^1(\mathrm{id}_P)\oplus(\mathrm{id}_{\mathrm{id}_P}\otimes\mathrm{id}_V)=\mathrm{id}_{\mathcal{F}(TP\oplus V)}.$$
On the other hand, given $f,g\in \mathrm{Hom}(\mathfrak{LP}(X))$ it is easy to see that $\mathcal{F}(g\circ f)=\mathcal{F}(g)\circ \mathcal{F}(f)$ as $T(g\circ f)=Tg\circ Tf$ and $j^1(g\circ f)=j^1g\circ j^1f$. These two properties prove that $\mathcal{F}$ is a functor. It has an inverse since 
\begin{align*}
\mathcal{G}:FT\mathfrak{LP}(X)\to &\mathfrak{LP}(X)\\
J^1P\oplus(T^*X\otimes V) \mapsto &TP\oplus V,
\end{align*}
\begin{align*}
\mathcal{G}:\mathrm{Hom}(FT\mathfrak{LP}(X))\to &\mathrm{Hom}(\mathfrak{LP}(X))\\
j^1(f_0)\oplus(\mathrm{id}_{f_0}\otimes\bar{f}) \mapsto &Tf_0\oplus\bar{f},
\end{align*}
is well defined, $\mathcal{G}\circ\mathcal{F}=\mathrm{id}_{\mathfrak{LP}(X)}$, and $\mathcal{F}\circ\mathcal{G}=\mathrm{id}_{FT\mathfrak{LP}(X)}$.
\end{proof}
\begin{corollary} \label{isos}
The following three statements are equivalent: $j^1(f_0)\oplus(\mathrm{id}_{f_0}\otimes\bar{f})$ is an isomorphism; $Tf_0\oplus\bar{f}$ is an isomorphism; and $\bar{f}$ is an isomorphism.
\end{corollary}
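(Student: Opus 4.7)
The plan is to chain two equivalences: $(1)\Leftrightarrow(2)$ by invoking the categorical isomorphism just established in Proposition \ref{catequiv}, and $(2)\Leftrightarrow(3)$ by a direct analysis of the decomposition of an $\mathfrak{LP}(X)$-morphism into its tangent and vector-bundle parts.

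For $(1)\Leftrightarrow(2)$, I would simply appeal to Proposition \ref{catequiv}. Since $\mathcal{F}$ and $\mathcal{G}$ are inverse covariant functors, they preserve and reflect isomorphisms: a morphism $h$ in $\mathfrak{LP}(X)$ is an isomorphism if and only if $\mathcal{F}(h)$ is an isomorphism in $FT\mathfrak{LP}(X)$. Applied to $h=Tf_0\oplus\bar{f}$ with image $\mathcal{F}(h)=j^1(f_0)\oplus(\mathrm{id}_{f_0}\otimes\bar{f})$, this delivers the equivalence with no extra work.

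For $(2)\Rightarrow(3)$, if $Tf_0\oplus\bar{f}$ has an inverse in $\mathfrak{LP}(X)$, the definition of morphisms in this subcategory forces that inverse to decompose as $Tg_0\oplus\bar{g}$ with $g_0:P_2\to P_1$ a bundle map over $X$; the $V$-component $\bar{g}$ is then a two-sided inverse to $\bar{f}$. For $(3)\Rightarrow(2)$, assume $\bar{f}:V_1\to V_2$ is a vector bundle isomorphism covering $f_0:P_1\to P_2$. A standard observation about vector bundles is that such a $\bar{f}$ forces $f_0$ to be a diffeomorphism, since $\bar{f}^{-1}$ itself covers a smooth map $P_2\to P_1$ which must be a two-sided inverse of $f_0$. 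Therefore $Tf_0$ is a vector bundle isomorphism, and $Tf_0\oplus\bar{f}$ is a vector bundle isomorphism with inverse $T(f_0^{-1})\oplus\bar{f}^{-1}$.

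It still has to be checked that this inverse is again an $\mathfrak{LP}$-morphism, i.e., that $\bar{f}^{-1}$ commutes with the brackets $[,]_i$, the forms $\omega_i$, and the connections $\nabla_i$. Each of these conditions is self-dual under inversion: substituting $u=\bar{f}^{-1}(w)$, $u'=\bar{f}^{-1}(w')$ in $\bar{f}([u,u']_1)=[\bar{f}(u),\bar{f}(u')]_2$ yields $\bar{f}^{-1}([w,w']_2)=[\bar{f}^{-1}(w),\bar{f}^{-1}(w')]_1$, and analogous substitutions handle $\omega$ and the covariant derivative. The only mildly delicate step in the whole argument is the basic remark that a vector bundle isomorphism whose bases are a priori different forces a diffeomorphism of the bases; everything else is either a functorial restatement or a one-line algebraic manipulation.
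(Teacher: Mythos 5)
Your proof is correct and follows essentially the same route as the paper: the equivalence of the first two statements via the categorical isomorphism $\mathcal{F}$ of Proposition \ref{catequiv}, and the equivalence with the third via the observation that $\bar{f}$ (together with the base map $f_0$ it covers, which it determines) fully determines the other two morphisms. You merely spell out in more detail what the paper compresses into one sentence, in particular the check that $f_0$ is forced to be a diffeomorphism and that the inverse respects the $\mathfrak{LP}$ structures.
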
  
\begin{proof}
The first two statements are equivalent since $\mathcal{F}$ is an isomorphism of categories. The third statement is equivalent to the others since $\bar{f}$ fully determines both $j^1(f_0)\oplus(\mathrm{id}_{f_0}\otimes\bar{f})$ and $Tf_0\oplus\bar{f}$.
\end{proof}

We shall define the notion of an action of a group $G$ on an object of $FT\mathfrak{LP}(X)$.
\begin{definition}
An action in the category $FT\mathfrak{LP}(X)$ of a group $G$ on an object $J^1P\oplus(T^*X\otimes V)$ of $FT\mathfrak{LP}(X)$ is $\Phi:G\times(J^1P\oplus(T^*X\otimes V))\to J^1P\oplus(T^*X\otimes V)$ such that for each $g\in G$, $\Phi_g:J^1P\oplus(T^*X\otimes V)\to J^1P\oplus(T^*X\otimes V)$ belongs to $\mathrm{Hom}(FT\mathfrak{LP}(X))$. We will say that this action is free and proper if the induced action on $P$ by the functions $(\Phi_g)_0$ is free and proper.
\end{definition}

\begin{proposition}
Let $J^1P\oplus(T^*X\otimes_P V)$ be an object of $FT\mathfrak{LP}(X)$ and let $[,]$, $\nabla$ and $\omega$ be the structure in $V$. Let $G$ be a Lie group acting freely and properly on  $J^1P\oplus(T^*X\otimes_P V)$ and $\mathcal{A}$ a connection in the principal bundle $P\to \Sigma=P/G$. The bundle $$J^1\Sigma\oplus(T^*X\otimes_{\Sigma} (\mathrm{Ad} P\oplus (V/G)))$$ with the structure $[,]^{\g}$, $\nabla^{\g}$ and $\omega^{\g}$ on $\mathrm{Ad} P\oplus V$ as in Proposition \ref{quotientLP} is an object in $FT\mathfrak{LP}(X)$ diffeomorphic to $(J^1P\oplus(T^*X\otimes_P V))/G$ via the bundle diffeomorphism
\begin{align}
\beta_{\mathcal{A}}:(J^1P\oplus(T^*X\otimes V))/G&\longrightarrow J^1\Sigma\oplus(T^*X\otimes (\textrm{ad} P\oplus (V/G))) \label{identificaction2}\\
[j^1_xs\oplus ((p,\alpha),v)]_G &\mapsto (T_{\pi_{\Sigma,P}}\circ j^1_xs, [p,s^*\mathcal{A}]_G+((\sigma,\alpha),[v]_G)),\notag
\end{align}
where $p\in P$, $\sigma=\pi_{\Sigma,P}(p)\in \Sigma$,  $x=\pi_{X,P}(p)\in X$ and $j^1_xs\in J^1_pP$.
\end{proposition}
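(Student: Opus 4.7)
The plan is to leverage the categorical equivalence established in Proposition \ref{catequiv} in order to reduce everything to Proposition \ref{quotientLP}. Because every morphism in $FT\mathfrak{LP}(X)$ is determined by its component $f_0:P\to P$ and its vector bundle part $\bar{f}:V\to V$ (Corollary \ref{isos}), the given action $\Phi$ is the image under $\mathcal{F}$ of a canonical action of $G$ on the $\mathfrak{LP}(X)$ bundle $TP\oplus V$; this action is free and proper on $P$ by hypothesis, and it preserves the fibration $P\to X$ since each $(\Phi_g)_0$ covers $\mathrm{id}_X$. Consequently the base quotient $\Sigma=P/G$ is a well-defined fiber bundle over $X$.

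With this data I would apply Proposition \ref{quotientLP} directly to $TP\oplus V$ with the principal connection $\mathcal{A}$. The output is the $\mathfrak{LP}$ bundle $T\Sigma\oplus\mathrm{Ad}P\oplus(V/G)$ equipped with the very structures $[,]^{\g}$, $\omega^{\g}$, $\nabla^{\g}$ appearing in the statement. Since $\Sigma\to X$ is a fiber bundle, this is in fact an object of $\mathfrak{LP}(X)$, and applying $\mathcal{F}$ returns the claimed object $J^1\Sigma\oplus(T^*X\otimes_{\Sigma}(\mathrm{Ad}P\oplus(V/G)))$ of $FT\mathfrak{LP}(X)$. The bundle isomorphism $\beta_{\mathcal{A}}$ should then be assembled from two pieces: on the $1$-jet summand, the identification $\alpha_{\mathcal{A}}$ of \eqref{identification2} sending $[j^1_xs]_G$ to $T\pi_{\Sigma,P}\circ j^1_xs\oplus[s,s^*\mathcal{A}]_G$; and on the tensor summand, the observation that because each $(\Phi_g)_0$ covers $\mathrm{id}_X$, the group $G$ acts trivially on the $T^*X$ factor of $T^*X\otimes_P V$, so that $(T^*X\otimes_P V)/G$ is canonically isomorphic to $T^*X\otimes_{\Sigma}(V/G)$ via $[((p,\alpha),v)]_G\mapsto((\sigma,\alpha),[v]_G)$. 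Summing these two identifications gives exactly the formula written for $\beta_{\mathcal{A}}$.

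The main obstacle is verifying that $\beta_{\mathcal{A}}$ is well-defined on $G$-orbits and is a bona fide bundle diffeomorphism. Well-definedness of the first summand rests on the $G$-equivariance of $\mathcal{A}$, which holds because $\mathcal{A}$ is a principal connection, while well-definedness of the second summand rests on the triviality of the $G$-action on the $T^*X$ factor, which is condition (iii) of the morphism definition applied to each $\Phi_g$. Smoothness follows from the smoothness of $\alpha_{\mathcal{A}}$ and of the vector bundle quotient recalled in \S\ref{quotientvector}, and an inverse is obtained by reversing both constructions fiberwise using horizontal lift with respect to $\mathcal{A}$. The fact that $\beta_{\mathcal{A}}$ intertwines the $\mathfrak{LP}$-type structures on the two sides comes for free once one observes that, by construction, we have merely transported Proposition \ref{quotientLP} through the isomorphism of categories $\mathcal{F}$.
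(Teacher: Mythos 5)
Your proposal is correct and follows essentially the same route as the paper: transfer the $G$-action to $TP\oplus V$ via the functor $\mathcal{F}$ (using Corollary \ref{isos}), apply Proposition \ref{quotientLP} to obtain $T\Sigma\oplus\mathrm{Ad}P\oplus(V/G)$ with the reduced structures, observe this lies in $\mathfrak{LP}(X)$ and push it back through $\mathcal{F}$, then check $\beta_{\mathcal{A}}$ is a well-defined bundle diffeomorphism with an explicit fiberwise inverse built from the horizontal lift. Your discussion of why $\beta_{\mathcal{A}}$ splits into the jet summand (via $\alpha_{\mathcal{A}}$) and the tensor summand (via triviality of the $G$-action on the $T^*X$ factor) is in fact slightly more explicit than what the paper records.
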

\begin{proof}
As $G$ acts on $J^1P\oplus(T^*X\otimes V)$, for each $g\in G$ there exists an isomorphism $$\Phi_g=j^1((\Phi_g)_0)\oplus(\mathrm{id}_{(\Phi_g)_0}\otimes\bar{\Phi}_g):J^1P\oplus(T^*X\otimes V)\to J^1P\oplus(T^*X\otimes V)$$ of the category $FT\mathfrak{LP}(X)$.
Therefore, by Corollary \ref{isos}, $G$ acts on $TP\oplus V=\mathcal{F}^{-1}(J^1P\oplus(T^*X\otimes V))$ with isomorphisms 
$$\mathcal{F}^{-1}(\Phi_g)=T(\Phi_g)_0\oplus\bar{\Phi}_g:TP\oplus V\to TP\oplus V.$$
The quotient of $TP\oplus V$ by this action using connection $\mathcal{A}$ is isomorphic to another $\mathfrak{LP}$ bundle, $T\Sigma\oplus\mathrm{Ad} P\oplus (V/G)$ with structures $[,]^{\g}$, $\nabla^{\g}$ and $\omega^{\g}$. This bundle happens to be in $\mathfrak{LP}(X)$ since $\Sigma\to X$, then, from Proposition \ref{catequiv} $$\mathcal{F}(T\Sigma\oplus\mathrm{Ad} P\oplus (V/G))=J^1\Sigma\oplus(T^*X\otimes (\textrm{Ad} P\oplus (V/G)))$$ is isomorphic to $(J^1P\oplus(T^*X\otimes_P V))/G$. Finally, it can be checked that $\beta_{\mathcal{A}}$ is well defined and that
\begin{align*}
\beta^{-1}_{\mathcal{A}}:J^1\Sigma\oplus(T^*X\otimes (\textrm{ad} P\oplus (V/G)))\to &(J^1P\oplus(T^*X\otimes V))/G\\
\delta_{\sigma}\oplus\ell_{\sigma}\oplus ((\sigma,\alpha),[v]_G) \mapsto &[(\mathrm{Hor}^{\mathcal{A}}_p\circ\delta_{\sigma}+\kappa_p\circ\ell_{\sigma})\oplus((p,\alpha),v_p)]_G,
\end{align*}
where $v_p\in \pi^{-1}_{P,V}(p)=V_p$ such that $[v_p]_G=[v]_G$ and $\kappa_p:(\mathrm{Ad} P)_{\sigma} \to V_pP$ defined by $\kappa_p([p,\xi]_G)=\xi^P_p$.
\end{proof}

\section{Variational problems in FT$\mathfrak{LP}$ bundles}\label{FTLPmec}

Let $J^1P\oplus(T^*X\otimes_P V)$ be a  FT$\mathfrak{LP}$ bundle. A \textit{Lagrangian density} is a smooth fiber map $\mathcal{L}:J^1P\oplus(T^*X\otimes_P V)\to\bigwedge^{n}TX$ where $n$ the dimension of $X$. We will assume that $X$ is orientable and we choose a volume form $\mathrm{Vol}\in\bigwedge^{n}TX$, then, the Lagrangian density can be expressed as $\mathcal{L}=L\mathrm{Vol}$ with $L:J^1P\oplus(T^*X\otimes_P V)\to \R$.

Let $U\subset X$ be an open subset whose closure $\bar{U}$ is compact. We will only consider smooth sections $j^1\rho\oplus \nu: \bar{U}\to J^1P\oplus(T^*X\otimes_P V)$ such that $\nu\in\Gamma(\bar{U},T^*X\otimes_P V)$ projects to a section $\rho=\pi_{P,T^*X\otimes V}\circ\nu\in\Gamma(\bar{U},P)$, the $1$-jet extension of which is $j^1\rho$. These sections are called \textit{allowed sections}. We say that $j^1\rho\oplus \nu\in\Gamma(\bar{U}, J^1P\oplus(T^*X\otimes_P V))$ is a \textit{critical section} for the variational problem defined by $\mathcal{L}$ if 
\begin{equation}
\frac{d}{d\varepsilon}\biggr\rvert_{\varepsilon=0}\int_{U}\mathcal{L}(j^1\rho_{\varepsilon}\oplus \nu_{\varepsilon})=0
\end{equation}
for all smooth \textit{allowed variations} $j^1\rho_{\varepsilon}\oplus \nu_{\varepsilon}$. However, in order to define the set of allowed variations we will first introduce a connection in $T^*X\otimes_P V$ from a connection $\nabla^X$ in $TX\to X$ and a connection $\nabla$ on $V\to P$. In fact, given $\nu\in \Gamma(P,T^*X\otimes_P V)$, $Z\in\mathfrak{X}(P)$ and $u\in\mathfrak{X}(U)$, the connection $\nabla^L$ on $T^*X\otimes_P V\to P$ is given by
\begin{equation}\label{connlin}
(\nabla^{L}_Z\nu)(u)=\nabla_Z(\nu(u))-\nu(\nabla^X_{T\pi_{X,P}(Z)}u).
\end{equation}
 
\begin{definition}
An allowed variation of an allowed section, $j^1\rho\oplus \nu: \bar{U}\to J^1P\oplus(T^*X\otimes_P V)$,  is a smooth map $j^1\rho(x,\varepsilon)\oplus \nu(x,\varepsilon): \bar{U}\times I \to J^1P\oplus(T^*X\otimes_P V)$, where $I$ is an open interval with $0\in I$, such that:
\begin{enumerate}
\item for all $\varepsilon\in I$, $j^1\rho_{\varepsilon}(x)\oplus \nu_{\varepsilon}(x):\bar{U}\to J^1P\oplus(T^*X\otimes_P V)$ is an allowed section and $j^1\rho_{\varepsilon}\oplus \nu_{\varepsilon}\vert_{\partial U}=j^1\rho\oplus \nu\vert_{\partial U}$;
\item for $\varepsilon=0$, $j^1\rho_{\varepsilon}(x)\oplus \nu_{\varepsilon}(x)=j^1\rho(x)\oplus \nu(x)$;
\item the variation of $\nu$ is of the form $$\delta\nu\equiv\frac{D^{\nabla^L}\nu_{\varepsilon}}{D\varepsilon}\biggr\rvert_{\varepsilon=0}=\tilde{\nabla}\mu-[\mu,\nu]+\rho^*(i_{\delta\rho}\omega),$$
where $\omega$ is the $2$-form in the additional structure of $V$,$\tilde{\nabla}$ is the $\nabla$-derivative of $(V,\nabla)\to P\to X$ and $\mu\in\Gamma(\bar{U},V)$ an arbitrary  section with $\pi_{P,V}\circ\mu=\rho$ and  $\mu\vert_{\partial U}=0$.
\end{enumerate} 
\end{definition}
\begin{remark}
Let $j^1\rho_{\varepsilon}\oplus \nu_{\varepsilon}$ be an allowed variation of $j^1\rho\oplus \nu$, the variation of $\rho_{\varepsilon}=\pi_{P,T^*X\otimes_P V}\circ\nu_{\varepsilon}$ is vertical in the sense that 
$$\delta\rho(x)\equiv\frac{d\rho_{\varepsilon}(x)}{d\varepsilon}\biggr\rvert_{\varepsilon=0}\in V_{\rho(x)}P=\mathrm{ker}(T_{\rho(x)}\pi_{X,P}).$$
Consequently, $\delta j^1\rho(x)\in V_{\rho(x)}J^1P=\mathrm{ker}(T_{j^1\rho(x)}\pi_{X,J^1P})$
\end{remark}

\begin{remark}
It is very important to realize that, since $\rho=\pi_{P,T^*X\otimes_P V}\nu$, an allowed section $(j^1\rho,\nu)$ is completely defined by $\nu\in\Gamma(\bar{U},T^*X\otimes_P V)$. In a similar way, an allowed variation is completely determined by $\delta\nu$ since $\delta \rho$ is its projection along the map $V\to X$.
\end{remark}
\begin{remark}
\em Given $u_x\in T_xX$,
$$\delta\nu(u_x)\equiv\frac{D^{\nabla^L}\nu_{\varepsilon}}{D\varepsilon}\biggr\rvert_{\varepsilon=0}(u_x)=\frac{D^{\nabla}\nu_{\varepsilon}(u_x)}{D\varepsilon}\biggr\rvert_{\varepsilon=0}-\nu(\nabla^X_{T\pi_{X,P}(\delta\rho)}u)=\frac{D^{\nabla}\nu_{\varepsilon}(u_x)}{D\varepsilon}\biggr\rvert_{\varepsilon=0},$$
where $u$ is a local section of $TX\to X$ around $x$ such that $u(x)=u_x$ and the last equivalence is a consequence of $\delta\rho$ being vertical. Thus, the variation $\delta\nu$ does not depend on the connection $\nabla^X$ on $TX\to X$.\em
\end{remark}

We now find the variational equations defined by these set of allowed sections and variations. For any smooth function $L:J^1P\oplus(T^*X\otimes_P V)\to\R$, the fiber derivatives are defined as 
\begin{equation} \label{fibderj}
\left\langle\frac{\delta L}{\delta j^1\rho}(j^1\rho\oplus\nu),\alpha \right\rangle =\frac{d}{d\epsilon}\biggr\rvert_{\epsilon=0}L((j^1\rho+\epsilon\alpha)\oplus\nu);
\end{equation} 
\begin{equation} \label{fibderv}
\left\langle\frac{\delta L}{\delta \nu}(j^1\rho\oplus\nu),\beta \right\rangle =\frac{d}{d\epsilon}\biggr\rvert_{\epsilon=0}L(j^1\rho\oplus(\nu+\epsilon\beta)),
\end{equation}
where $\alpha\in T^*X\otimes VP$ and $\beta\in T^*X\otimes V$. Therefore, $$\frac{\delta L}{\delta j^1\rho}(j^1\rho\oplus\nu)\in TX\otimes VP^*,\hspace{10mm}\frac{\delta L}{\delta \nu}(j^1\rho\oplus\nu)\in TX\otimes V^*,$$ and if we compose them with a section $j^1\rho(x)\oplus\nu(x)\in\Gamma(\bar{U},J^1P\oplus(T^*X\otimes V))$ we obtain $\delta L/ \delta j^1\rho(x)$ in $\Gamma(\bar{U}, TX\otimes VP^*)$ and $\delta L/ \delta\nu(x)$ in $\Gamma(\bar{U}, TX\otimes V^*)$. 

Given $\nabla^P$ a linear connection on $TP\to P$ and $\nabla^X$ a linear connection in $TX\to X$, using the dual and the product connections, there is a lineal connection in $T^*X\otimes_PTP$. According to \cite{Janyska}, this linear connection induces a general connection $\nabla^{J^1P}$ on $J^1P\to P$ that does not depend on the choice of $\nabla^X$. Furthermore, we can choose $\nabla^P$ to be projectable to $\nabla^X$ and the connection in $\nabla^{J^1P}$ will be affine. In addition, we define an affine connection, $\nabla^{J^1P}\oplus\nabla^L$, in $J^1P\oplus(T^*X\otimes_P V)$. Therefore, we define an horizontal derivative,
\begin{equation} \label{fibderh}
\left\langle\frac{\delta L}{\delta \rho}(j^1\rho\oplus\nu),Z_p \right\rangle =\frac{d}{d\epsilon}\biggr\rvert_{\epsilon=0}L(\zeta^h_{j^1\rho\oplus\nu}(\epsilon));
\end{equation} 
where $Z_p\in T_pP$ and $\zeta(\epsilon)$ is a curve in $P$ such that $\dot{\zeta}(0)=Z_p$ and $\zeta^h_{j^1\rho\oplus\nu}(\epsilon)$ is the horizontal lift of $\zeta(\epsilon)$ to $(J^1P\oplus(T^*X\otimes V))$ through $j^1\rho\oplus\nu$ using the connection $\nabla^{J^1P}\oplus\nabla^L$. Thus, $$\frac{\delta L}{\delta \rho}(j^1\rho\oplus\nu)\in T^*P.$$

We will also need a general notion of divergence of fields with values in a vector bundle. Let $E\to P$ be a vector bundle with affine connection $\nabla$ and $P\to X$ a fiber bundle, we define for any $\chi\in\Gamma(X,TX\otimes E^*)$ the divergence $\mathrm{div}^{\nabla}\chi\in \Gamma(X,E^*)$ such that, for any $\eta\in\Gamma(X,E)$,
\begin{align*}
\mathrm{div}\langle\xi,\eta\rangle=\langle\mathrm{div}^{\nabla}\chi,\eta\rangle+\langle\chi,\tilde{\nabla}\eta\rangle,
\end{align*}
where $\mathrm{div}$ is the usual divergence of a vector field in $X$ (with respect to the volume form $\mathrm{Vol}$). In our exposition we will use the operators
$$\mathrm{div}^{\nabla}:\Gamma(\bar{U}, TX\otimes V^*)\to \Gamma(\bar{U}, V^*);$$
$$\mathrm{div}^P:\Gamma(\bar{U}, TX\otimes VP^*)\to \Gamma(\bar{U}, VP^*);$$
induced by the connection $\nabla$ in $V\to P$, and the restriction of the connection $\nabla ^P$ to $VP\subset TP$.

Finally, we define the coadjoint operator in this context as
\begin{align*}
\mathrm{ad}^*:\Gamma(\bar{U},T^*X\otimes V)\times\Gamma(\bar{U},TX\otimes V^*)\to&\Gamma(\bar{U},V^*)\\
(\nu_1,\nu_2)\mapsto&(\mu\mapsto\mathrm{ad}_{\nu_1}\nu_2(\mu)=\langle\nu_2,[\nu_1,\mu]\rangle
\end{align*}
for all $\mu\in\Gamma(\bar{U},V)$.

\begin{proposition}\label{LPeq}
Let $J^1P\oplus(T^*X\otimes V)$ be a FT$\mathfrak{LP}$ bundle with a Lagrangian density $\mathcal{L}:J^1P\oplus(T^*X\otimes V)\to\bigwedge^{n}TX$ and a volume form $\mathrm{Vol}\in\bigwedge^{n}TX$, such that $\mathcal{L}=L\mathrm{Vol}$. Let $\nabla ^P$ a linear connection in $TP\to P$. Then an allowed section $j^1\rho\oplus\nu\in \Gamma(\bar{U},J^1P\oplus(T^*X\otimes V))$ is critical for the variational problem defined by $\mathcal{L}$ if and only if it satisfies the Lagrange-Poincar\'e equations:
\begin{equation}
\mathrm{ad}^*\frac{\delta L}{\delta\nu}-\mathrm{div}^{\nabla}\frac{\delta L}{\delta\nu}=0,
\end{equation}
\begin{equation}
\frac{\delta L}{\delta\rho}-\mathrm{div}^{P}\frac{\delta L}{\delta j^1\rho}-\langle\frac{\delta L}{\delta j^1\rho},i_{T_{\rho}}T^P\rangle=\langle\frac{\delta L}{\delta\nu},i_{T_{\rho}}\omega\rangle,
\end{equation}
where $T^P$ is the torsion tensor of connection $\nabla^P$. Since this connection is arbitrary we can always choose a connection without torsion and remove this term.
\end{proposition}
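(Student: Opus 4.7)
The plan is to differentiate the action $\int_U L(j^1\rho_\varepsilon\oplus\nu_\varepsilon)\,\mathrm{Vol}$ at $\varepsilon=0$, use the splitting of the total derivative into the three pieces defined in \eqref{fibderj}, \eqref{fibderv}, \eqref{fibderh}, substitute the prescribed form of $\delta\nu$, integrate by parts, and isolate the coefficients of the two independent free parameters $\mu$ and $\delta\rho$ that enter an allowed variation. Concretely, using the affine connection $\nabla^{J^1P}\oplus\nabla^L$ (with $\nabla^{J^1P}$ the connection induced by $\nabla^P$ via \cite{Janyska}), the $\varepsilon$-derivative of $L$ along the variation decomposes as
\begin{equation*}
\tfrac{d}{d\varepsilon}\big|_0 L=\Big\langle\tfrac{\delta L}{\delta\rho},\delta\rho\Big\rangle+\Big\langle\tfrac{\delta L}{\delta j^1\rho},\tfrac{D^{J^1P}}{D\varepsilon}\big|_0 j^1\rho_\varepsilon\Big\rangle+\Big\langle\tfrac{\delta L}{\delta\nu},\delta\nu\Big\rangle,
\end{equation*}
where the first pairing captures the horizontal component of the velocity of the variation on the base $P$ and the other two capture the vertical components along the fibres of $J^1P$ and of $T^*X\otimes V$.

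Next I would substitute $\delta\nu=\tilde{\nabla}\mu-[\mu,\nu]+\rho^*(i_{\delta\rho}\omega)$ into the last term and split it into three summands. The $[\mu,\nu]$ piece produces, by the definition of the coadjoint operator, the term $\langle\mathrm{ad}^*_\nu(\delta L/\delta\nu),\mu\rangle$. The $\rho^*(i_{\delta\rho}\omega)$ piece produces $\langle\delta L/\delta\nu,i_{T\rho}\omega\rangle$ paired with $\delta\rho$. The $\tilde{\nabla}\mu$ piece is treated by the divergence identity: since $\mu\in\Gamma(\bar U,V)$ vanishes on $\partial U$, integration over $U$ gives $-\int_U\langle\mathrm{div}^\nabla(\delta L/\delta\nu),\mu\rangle\mathrm{Vol}$. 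In parallel, the jet-prolongation identity $\delta j^1\rho=j^1(\delta\rho)$ combined with the compatibility of $\nabla^{J^1P}$ with $\nabla^P$ lets me rewrite $\frac{D^{J^1P}}{D\varepsilon}|_0 j^1\rho_\varepsilon$ as a covariant derivative of $\delta\rho$ plus a pointwise correction linear in $T^P(T\rho,\delta\rho)$; integrating the corresponding pairing by parts and using that $\delta\rho$ is vertical and compactly supported yields exactly $-\mathrm{div}^P(\delta L/\delta j^1\rho)-\langle\delta L/\delta j^1\rho,i_{T\rho}T^P\rangle$ paired with $\delta\rho$.

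Finally, I would collect coefficients. The total variation takes the form $\int_U\langle A,\mu\rangle+\langle B,\delta\rho\rangle\,\mathrm{Vol}$ with
\begin{equation*}
A=\mathrm{ad}^*\tfrac{\delta L}{\delta\nu}-\mathrm{div}^{\nabla}\tfrac{\delta L}{\delta\nu},
\end{equation*}
\begin{equation*}
B=\tfrac{\delta L}{\delta\rho}-\mathrm{div}^{P}\tfrac{\delta L}{\delta j^1\rho}-\Big\langle\tfrac{\delta L}{\delta j^1\rho},i_{T\rho}T^P\Big\rangle-\Big\langle\tfrac{\delta L}{\delta\nu},i_{T\rho}\omega\Big\rangle.
\end{equation*}
Since $\mu$ and $\delta\rho$ are independent arbitrary data (an arbitrary $V$-section over $\rho$ vanishing on $\partial U$, and an arbitrary vertical compactly supported variation of $\rho$), the fundamental lemma of the calculus of variations forces $A=0$ and $B=0$, which are the two Lagrange-Poincar\'e equations in the statement.

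The main obstacle I expect is step involving the covariant variation of the $1$-jet. One must show that, under the Janyska-type induced connection $\nabla^{J^1P}$ obtained from a projectable $\nabla^P$, the integration by parts of $\langle\delta L/\delta j^1\rho,\delta j^1\rho\rangle$ along an allowed vertical variation produces precisely the divergence $\mathrm{div}^P(\delta L/\delta j^1\rho)$ together with the torsion term $\langle\delta L/\delta j^1\rho,i_{T\rho}T^P\rangle$; this is a local but somewhat technical computation in which the symmetrization built into jets interacts with the antisymmetric torsion of $\nabla^P$. Once this computation is done, the rest of the argument is a routine two-parameter application of the divergence theorem and the arbitrariness of $\mu$ and $\delta\rho$, and the remark about choosing $\nabla^P$ torsion-free follows immediately from the fact that no quantity in the variational problem itself depends on the auxiliary connection.
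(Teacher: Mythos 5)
Your proposal is correct and follows essentially the same route as the paper: the same three-way decomposition of the variation of the action, the same substitution of the allowed-variation formula for $\delta\nu$, and the same integration by parts against the independent free data $\mu$ and $\delta\rho$. The ``main obstacle'' you flag is resolved in the paper exactly as you anticipate, via the standard identity $\frac{D^{\nabla^P}}{D\varepsilon}\frac{d}{dt}\rho_{\varepsilon}(\gamma(t))-\frac{D^{\nabla^P}}{Dt}\frac{d}{d\varepsilon}\rho_{\varepsilon}(\gamma(t))=T^P\bigl(\frac{d}{d\varepsilon}\rho_{\varepsilon},\frac{d}{dt}\rho_{\varepsilon}\bigr)$ together with the verticality of $\delta\rho$ and the projectability of $\nabla^P$.
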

\begin{proof}
Using the derivatives defined by equations \ref{fibderj}, \ref{fibderv} and \ref{fibderh}, we rewrite the derivative of the action as;
\begin{align*}
&\frac{d}{d\varepsilon}\biggr\rvert_{\varepsilon=0}\int_{U}\mathcal{L}(j^1\rho_{\varepsilon}\oplus \nu_{\varepsilon})=\frac{d}{d\varepsilon}\biggr\rvert_{\varepsilon=0}\int_{U}L(j^1\rho_{\varepsilon}\oplus \nu_{\varepsilon})\mathrm{Vol}=\\
&=\int_U \left\langle \frac{\delta L}{\delta\rho}(x),\frac{d\rho_{\varepsilon}(x)}{d\varepsilon}\biggr\rvert_{\varepsilon=0}\right\rangle\mathrm{Vol} + \int_U\left\langle\frac{\delta L}{\delta j^1\rho}(x), \frac{D^{\nabla^{J^1P}}j^1\rho_{\varepsilon}(x)}{D\varepsilon}\biggr\rvert_{\varepsilon=0}\right\rangle\mathrm{Vol} \\ &\phantom{al}+\int_U\left\langle \frac{\delta L}{\delta\nu}(x), \frac{D^L\nu_{\varepsilon}(x)}{D\varepsilon}\biggr\rvert_{\varepsilon=0}\right\rangle \mathrm{Vol}.
\end{align*}
We know from the definition of allowed variation that
\begin{equation} \label{variation}
\frac{D^{\nabla^L}\nu_{\varepsilon}}{D\varepsilon}\biggr\rvert_{\varepsilon=0}=\tilde{\nabla}\mu-[\mu,\nu]+\rho^*(i_{\delta\rho}\omega).
\end{equation} On the other hand, since the variation of $\rho$ is vertical and $\nabla^P$ is projectable,
\begin{align*}
\frac{D^{\nabla^{J^1P}}j^1\rho_{\varepsilon}(x)}{D\varepsilon}\biggr\rvert_{\varepsilon=0}(u_x)=\frac{D^{\nabla^P}j^1\rho_{\varepsilon}(x)(u_x)}{D\varepsilon}\biggr\rvert_{\varepsilon=0},
\end{align*}
for all $u_x\in T_xX$. Consider $\rho_{\varepsilon}(\gamma(t))$ where $\gamma(t)$ is a curve such that $\dot{\gamma}(0)=u_x$. From the formula
$$\frac{D^{\nabla^P}}{D\varepsilon}\frac{d}{dt}\rho_{\varepsilon}(\gamma(t))-\frac{D^{\nabla^P}}{Dt}\frac{d}{d\varepsilon}\rho_{\varepsilon}(\gamma(t))=T^P\left(\frac{d}{d\varepsilon}\rho_{\varepsilon}(\gamma(t)),\frac{d}{dt}\rho_{\varepsilon}(\gamma(t)) \right),$$
we get that 
\begin{equation} \label{varjp}
\frac{D^{\nabla^{J^1P}}j^1\rho_{\varepsilon}(x)}{D\varepsilon}\biggr\rvert_{\varepsilon=0}(u_x)=\tilde{\nabla}^P_{u_x}\delta\rho(x)+T^P(\delta\rho(x),T_x\rho(u_x))
\end{equation}
After substitution of equations \ref{variation} and \ref{varjp}, the derivative of the action is
\begin{align*}
&\frac{d}{d\varepsilon}\biggr\rvert_{\varepsilon=0}\int_{U}\mathcal{L}(j^1\rho_{\varepsilon}\oplus \nu_{\varepsilon})=\\
&=\int_U \left\langle \frac{\delta L}{\delta\rho}(x),\delta\rho(x)\right\rangle\mathrm{Vol} + \int_U\left\langle\frac{\delta L}{\delta j^1\rho}(x), \tilde{\nabla}^P\delta\rho(x)+T^P(\delta\rho(x),T\rho)
\right\rangle\mathrm{Vol} \\ &\phantom{al}+\int_U\left\langle \frac{\delta L}{\delta\nu}(x), \tilde{\nabla}\mu-[\mu,\nu]+\omega(\delta\rho(x),T\rho)\right\rangle \mathrm{Vol}\\
&=\int_U \left\langle \frac{\delta L}{\delta\rho}(x)-\mathrm{div}^P\frac{\delta L}{\delta j^1\rho}(x)-\left\langle \frac{\delta L}{\delta j^1\rho},i_{T\rho}T^P\right\rangle-\left\langle \frac{\delta L}{\delta\mu},i_{T\rho}\omega\right\rangle ,\delta\rho(x)\right\rangle\mathrm{Vol}\\
&\phantom{al}+\int_U\left\langle -\mathrm{div}^{\nabla}\frac{\delta L}{\delta\nu}(x)+\mathrm{ad}_{\nu},\mu\right\rangle \mathrm{Vol},
\end{align*}
where for the second identity it has been used that $\delta\rho\vert_{\partial U}=0$ and $\mu\vert_{\partial U}=0$. Finally, from the last expression is clear that $j^1\rho(x)\oplus\nu(x)$ is critical if and only if the Euler--Poincar\'e equations are satisfied.
\end{proof}

\section{Reduction by stages}
In this section we shall see that the reduction procedure can be performed in the category  FT$\mathfrak{LP}$. Let $J^1P\oplus (T^*X\otimes V)$ be an object in FT$\mathfrak{LP}$, $G$ a Lie group acting freely and properly on $J^1P\oplus (T^*X\otimes V)$ and $\mathcal{A}$ a connection on $P\to\Sigma=P/G$. We recall from  subsection \ref{quotientvector} that the connection $\nabla$ on $V\to P$ induces an affine connection $\left[ \nabla^{(\mathcal{A},H)}\right] _{G,Y}$ on $V/G\to\Sigma$, for $Y\in\mathfrak{X}(\Sigma)$. Hence, there is a $\left[ \nabla^{(\mathcal{A},H)}\right] _{G}$-derivative on $V/G\to\Sigma\to X$ denoted by $\left[ \tilde{\nabla}^{(\mathcal{A},H)}\right] _{G}$. On the other hand, the vertical component of the reduced connection,  $\left[ \nabla^{(\mathcal{A},V)}\right] _{G,\bar{\xi}}$, is not a connection since we derive with respect to $\bar{\xi}\in \Gamma(\mathrm{Ad}P)$. However, given a section $[w]$ of $V/G\to X$, we define for all $x\in X$
$$\left[ \nabla^{(\mathcal{A},V)}\right] _{G,\xi}[w](x)=[\xi^V_{v}]_G(x),$$
where $v\in V$ such that $\pi_{V/G,V}(v)=[w](x)$, and $\bar{\xi}=[p,\xi]_G$ with $p=\pi_{V,P}(v)$. This is well defined since $\bar{\xi}=[gp,\mathrm{Ad}_g\xi]_G$ and $g\xi^V_{v}=(\mathrm{Ad}_G\xi)^V_{gv}.$


\begin{proposition}\label{reduction}
Let $J^1P\oplus (T^*X\otimes V)$ be an object in FT$\mathfrak{LP}$, $G$ a Lie group acting freely and properly on $J^1P\oplus (T^*X\otimes V)$, $\mathrm{Vol}$ a volume form on $X$ and $L:J^1P\oplus (T^*X\otimes V)\to\R$ a $G$-invariant Lagrangian.

Consider $\mathcal{A}$ a connection on $P\to\Sigma=P/G$ and 
$$l:J^1\Sigma\oplus (T^*X\otimes (\mathrm{Ad}Q\oplus (V/G)))\to\R$$
the reduced Lagrangian induced in the quotient via the identification \eqref{identification2}. Given a smooth local section $j^1\rho\oplus\nu\in \Gamma(\bar{U},J^1P\oplus (T^*X\otimes V))$, where $\nu=((\rho,\alpha),v)$, we define the reduced local section
$$j^1\sigma\oplus\bar{\rho}\oplus[\nu]_G=\beta_{\mathcal{A}}\circ\pi_G(j^1\rho\oplus\nu)=(T\pi_{P,\Sigma}\circ j^1\rho)\oplus[\rho,\rho^*\mathcal{A}]_G\oplus((\sigma,\alpha),v),$$
where $\sigma\in\Gamma(\bar{U},\Sigma)$ is $\pi_{\Sigma,P}\circ\rho$. Then, the following statements are equivalent
\begin{enumerate}[(i)]
\item Section $j^1\rho\oplus\nu\in \Gamma(\bar{U},J^1P\oplus (T^*X\otimes V))$ is a critical section for the variational problem defined by $L$ on $J^1P\oplus (T^*X\otimes V)$.
\item Section $j^1\rho\oplus\nu\in \Gamma(\bar{U},J^1P\oplus (T^*X\otimes V))$ satisfies the Lagrange-Poincar\'e equations given by $L$ in $J^1P\oplus (T^*X\otimes V)$.
\item Section $j^1\sigma\oplus\bar{\rho}\oplus[\nu]_G\in \Gamma(\bar{U},J^1\Sigma\oplus (T^*X\otimes (\mathrm{Ad}Q\oplus (V/G))))$ is a critical section for the variational problem defined by $l$ on $J^1\Sigma\oplus (T^*X\otimes (\mathrm{Ad}Q\oplus (V/G)))$.
\item Section $j^1\sigma\oplus\bar{\rho}\oplus[\nu]_G\in \Gamma(\bar{U},J^1\Sigma\oplus (T^*X\otimes (\mathrm{Ad}Q\oplus (V/G))))$ satisfies the Lagrange-Poincar\'e equations given by $l$ in $J^1\Sigma\oplus (T^*X\otimes (\mathrm{Ad}Q\oplus (V/G)))$.
\end{enumerate}
\end{proposition}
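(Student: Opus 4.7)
The plan is to split the four-way equivalence into three pieces and treat each separately. The equivalences (i)$\Leftrightarrow$(ii) and (iii)$\Leftrightarrow$(iv) are immediate applications of Proposition \ref{LPeq}: the first on the FT$\mathfrak{LP}$ bundle $J^1P\oplus(T^*X\otimes V)$ with its given structure, the second on the reduced bundle $J^1\Sigma\oplus(T^*X\otimes(\mathrm{Ad}P\oplus(V/G)))$ endowed with the $\mathfrak{LP}$-structure $[\,,\,]^{\g},\nabla^{\g},\omega^{\g}$ furnished by Proposition \ref{quotientLP}. What remains is the equivalence (i)$\Leftrightarrow$(iii), which is where the actual reduction takes place.

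For that step I would exploit the $G$-invariance of $L$ together with the bundle diffeomorphism $\beta_{\mathcal{A}}$ to rewrite the upstairs action integral as
\[
\int_U \mathcal{L}(j^1\rho_{\varepsilon}\oplus\nu_{\varepsilon})=\int_U l\bigl(\beta_{\mathcal{A}}\circ\pi_G(j^1\rho_{\varepsilon}\oplus\nu_{\varepsilon})\bigr)\,\mathrm{Vol},
\]
so that the two action functionals agree on corresponding families. The substantive task is then to establish a bijection between admissible variations on the two sides. Given an allowed variation with vertical $\delta\rho$ and $\delta\nu=\tilde{\nabla}\mu-[\mu,\nu]+\rho^*(i_{\delta\rho}\omega)$, the induced variation of the reduced section is controlled by a test pair $\bar\eta\oplus[\mu]_G\in\Gamma(\bar U,\mathrm{Ad}P\oplus(V/G))$, where $\bar\eta=[\rho,\mathcal{A}(\delta\rho)]_G$ and $[\mu]_G$ is the class of $\mu$. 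The $\delta(j^1\sigma)$ piece is automatic, and the $\delta\bar\rho$ piece is exactly the formula $\nabla^{\mathcal{A}}\bar\eta-[\bar\eta,\bar\rho]-\tilde B(\delta\sigma,T\sigma)$ proved in the Lagrange--Poincar\'e reduction of Section \ref{fiieldth}.

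The main technical step, and the expected obstacle, is to verify that $\delta[\nu]_G$ produces precisely the terms dictated by $\tilde{\nabla}^{\g}$, $[\,,\,]^{\g}$ and $\omega^{\g}$. To handle this I would decompose $\delta\rho$ into its $\mathcal{A}$-horizontal and $\mathcal{A}$-vertical components and, using the split of $\nabla$ into $\nabla^{(\mathcal{A},H)}$ and $\nabla^{(\mathcal{A},V)}$ from Subsection \ref{quotientvector}, track the four contributions: $\tilde\nabla\mu$ yields $[\tilde{\nabla}^{(\mathcal{A},H)}]_G[\mu]_G$ plus a vertical piece matching $[\nabla^{(\mathcal{A},V)}]_{G,\bar{\rho}}[\mu]_G-[\nabla^{(\mathcal{A},V)}]_{G,\bar\eta}[\nu]_G$; the bracket $-[\mu,\nu]$ descends to $-[[\mu]_G,[\nu]_G]_G$; and $\rho^*(i_{\delta\rho}\omega)$ splits into the curvature term $-\tilde B(\delta\sigma,T\sigma)$ (already accounted for in $\delta\bar\rho$) and the cross-term $-[\omega]_G$ appearing in the definitions of both $\nabla^{\g}$ and $[\,,\,]^{\g}$. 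Matching these terms to the assembly rules of Proposition \ref{quotientLP} gives exactly the reduced variation formula of Section \ref{FTLPmec} applied to the $\g$-structure.

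Conversely, every allowed variation of the reduced section lifts: choosing a $G$-invariant representative $\mu$ of the reduced $V/G$-test section and using $\mathrm{Hor}^{\mathcal{A}}$ together with the fundamental vector field associated to $\bar\eta$ to reconstruct $\delta\rho$, one obtains an allowed upstairs variation whose projection is the prescribed downstairs variation. Because the two action integrals coincide and the variations are in bijection, criticality on the two sides is equivalent, closing the loop (i)$\Leftrightarrow$(iii) and hence the full four-way equivalence.
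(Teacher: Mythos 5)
Your proposal follows essentially the same route as the paper: (i)$\Leftrightarrow$(ii) and (iii)$\Leftrightarrow$(iv) come from Proposition \ref{LPeq}, and (i)$\Leftrightarrow$(iii) is obtained by equating the action integrals through $\beta_{\mathcal{A}}\circ\pi_G$ and checking that the projected allowed variations coincide with the allowed variations of the reduced bundle, using the $\nabla^{(\mathcal{A},H)}$/$\nabla^{(\mathcal{A},V)}$ splitting and the structures of Proposition \ref{quotientLP}, exactly as the paper does. One minor slip worth fixing: the curvature term $\tilde{B}(\delta\sigma,T\sigma)$ in $\delta\bar{\rho}$ comes from varying $\bar{\rho}=[\rho,\rho^*\mathcal{A}]_G$ itself (the jet-bundle Lagrange--Poincar\'e formula of Section \ref{fiieldth}), not from splitting the $V$-valued term $\rho^*(i_{\delta\rho}\omega)$, which instead contributes the four $[\omega]_G$ cross-terms that are absorbed into $\nabla^{\g}$, $[\,,\,]^{\g}$ and $\omega^{\g}$.
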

\begin{proof}
From Proposition \ref{LPeq}, statements (i) and (ii) are equivalent. In an analogue way statements (iii) and (iv) are equivalent. We will prove the result by checking that statements (i) and (iii) are equivalent. 

Let $j^1\rho_{\varepsilon}\oplus\nu_{\varepsilon}$ be an allowed variation of $j^1\rho\oplus\nu$ and $$j^1\sigma_{\varepsilon}\oplus\bar{\rho}_{\varepsilon}\oplus[\nu]_{G,\varepsilon}=\beta_{\mathcal{A}}\circ\pi_G(j^1\rho_{\varepsilon}\oplus\nu_{\varepsilon}),$$
the projection of the allowed variation in $J^1P\oplus (T^*X\otimes V)$. Since 
\begin{align*}
&\frac{d}{d\varepsilon}\biggr\rvert_{\varepsilon=0}\int_{U}L(j^1\rho_{\varepsilon}\oplus \nu_{\varepsilon})\mathrm{Vol}=\frac{d}{d\varepsilon}\biggr\rvert_{\varepsilon=0}\int_{U}l(j^1\sigma_{\varepsilon}\oplus\bar{\rho}_{\varepsilon}\oplus[\nu]_{G,\varepsilon})\mathrm{Vol}=\\
&=\int_U \left\langle \frac{\delta l}{\delta\sigma}(x),\frac{d\sigma_{\varepsilon}(x)}{d\varepsilon}\biggr\rvert_{\varepsilon=0}\right\rangle\mathrm{Vol} + \int_U\left\langle\frac{\delta l}{\delta j^1\sigma}(x), \frac{D^{\nabla^{J^1\Sigma}}j^1\sigma_{\varepsilon}(x)}{D\varepsilon}\biggr\rvert_{\varepsilon=0}\right\rangle\mathrm{Vol} \\ &\phantom{al}+\int_U\left\langle \frac{\delta l}{\delta(\bar{\rho}\oplus[\nu]_G)}(x), \frac{D^L\bar{\rho}_{\varepsilon}\oplus[\nu]_{G,\varepsilon}(x)}{D\varepsilon}\biggr\rvert_{\varepsilon=0}\right\rangle \mathrm{Vol},
\end{align*}
the variational problem defined by $L$ on $J^1P\oplus (T^*X\otimes V)$ is equivalent to consider variations of $j^1\sigma\oplus \bar{\rho}\oplus[\nu]_G$ obtained by projecting allowed variations of $j^1\rho\oplus\nu$. Then, for all $u_x\in T_xX$,
\begin{align*}
&\frac{D^L\bar{\rho}_{\varepsilon}\oplus[\nu]_{G,\varepsilon}}{D\varepsilon}\biggr\rvert_{\varepsilon=0}(u_x)=\frac{D^{\g}\bar{\rho}_{\varepsilon}\oplus[\nu]_{G,\varepsilon}(u_x)}{D\varepsilon}\biggr\rvert_{\varepsilon=0}\\
&=\nabla_{\delta\sigma}^{\g}\bar{\rho}_{\varepsilon}(u_x)\oplus\left( [\nabla^{(\mathcal{A},H)}]_{G,\delta\sigma}[v]_G(u_x)-[\omega]_G(\delta\sigma,\bar{\rho}_{\varepsilon}(u_x))\right)\\
&=\nabla_{\delta\sigma}^{\g}\bar{\rho}_{\varepsilon}(u_x)\oplus\left([\nabla^{(\mathcal{A})}]_{G,\delta\rho}[\nu]_G(u_x)- [\nabla^{(\mathcal{A},V)}]_{G,\bar{\eta}}[v]_G(u_x)-[\omega]_G(\delta\sigma,\bar{\rho}_{\varepsilon}(u_x))\right),
\end{align*}
where $\bar{\eta}$ is such that $\delta\rho=\delta\sigma\oplus\bar{\eta}.$
From Lagrange-Poincar\'e reduction in jet bundles (see \cite[Corollary 3.2]{LP}), we know that 
$$\nabla_{\delta\sigma}^{\g}\bar{\rho}_{\varepsilon}(u_x)=\tilde{\nabla}_{u_x}\bar{\eta}-[\bar{\eta},\bar{\rho}(u_x)]+\tilde{B}(\delta\sigma,T\sigma(u_x)).$$
On the other hand, as the variation $\delta\nu$ is allowed
$$[\nabla^{(\mathcal{A})}]_{G,\delta\rho}[\nu]_G(u_x)=[\delta\nu(u_x)]_G=[\tilde{\nabla}_{u_x}\mu-[\mu,\nu(u_x)]+\omega(\delta\rho,T\rho(u_x))]_G.$$
We rewrite this class in an alternative way. First,
\begin{align*}
[\tilde{\nabla}_{u_x}\mu]_G&=[\nabla_{T\rho(u_x)}\mu]_G=[\nabla_{T\sigma(u_x)\oplus\bar{\rho}(u_x)}\mu]_G\\&=[\nabla^{(\mathcal{A},H)}]_{G,T\sigma(u_x)}[\mu]_G+[\nabla^{(\mathcal{A},V)}]_{G,\bar{\rho}(u_x)}[\mu]_G\\&=[\tilde{\nabla}^{(\mathcal{A},H)}]_{G,u_x}[\mu]_G+[\tilde{\nabla}^{(\mathcal{A},V)}]_{G,u_x}[\mu]_G,
\end{align*}
where we have defined  $[\tilde{\nabla}^{(\mathcal{A},V)}]_{G,u_x}[\mu]_G=[\nabla^{(\mathcal{A},V)}]_{G,\bar{\rho}(u_x)}[\mu]_G.$ We also have that 
$[[\mu,\nu(u_x)]]_G=[[\mu]_G,[\nu(u_x)]_G]$, and 
\begin{align*}
[\omega&(\delta\rho, T\rho(u_x))]_G=[\omega(\delta\sigma\oplus\bar{\eta},T\sigma(u_x)\oplus\bar{\rho}(u_x))]_G\\
&=[\omega]_G(\delta\sigma,T\sigma(u_x))+[\omega]_G(\bar{\eta},T\sigma(u_x))+[\omega]_G(\delta\sigma,\bar{\rho}(u_x))+[\omega]_G(\bar{\eta},\bar{\rho}(u_x)).
\end{align*}
In conclusion, for projected allowed variations 
\begin{align*}
&\frac{D^L\bar{\rho}_{\varepsilon}\oplus[\nu]_{G,\varepsilon}}{D\varepsilon}\biggr\rvert_{\varepsilon=0}(u_x)=\left(\tilde{\nabla}_{u_x}\bar{\eta}-[\bar{\eta},\bar{\rho}(u_x)]+\tilde{B}(\delta\sigma,T\sigma(u_x))\right)\oplus\\
&[\tilde{\nabla}^{(\mathcal{A},H)}]_{G,u_x}[\mu]_G+[\tilde{\nabla}^{(\mathcal{A},V)}]_{G,u_x}[\mu]_G-[[\mu]_G,[\nu(u_x)]_G]- [\nabla^{(\mathcal{A},V)}]_{G,\bar{\eta}}[v]_G(u_x)\\
&+[\omega]_G(\delta\sigma,T\sigma(u_x))+[\omega]_G(\bar{\eta},T\sigma(u_x))+[\omega]_G(\bar{\eta},\bar{\rho}(u_x)),
\end{align*}
where $\bar{\eta}\in\Gamma(\bar{U},\mathrm{Ad}P)$ such that $\pi_{\Sigma,\mathrm{Ad}P}(\bar{\eta})=\sigma$, $\bar{\eta}\vert_{\partial U}=0$ and $[\mu]_G\in\Gamma(\bar{U},V/G)$ such that $\pi_{\Sigma,V/G}([\mu]_G)=\sigma$, $[\mu]_G\vert_{\partial U}=0$. We now see that this variations coincide with the allowed variations in $J^1\Sigma\oplus (T^*X\otimes (\mathrm{Ad}Q\oplus (V/G)))$. An allowed variation of section $j^1\sigma\oplus\bar{\rho}\oplus[\nu]_G$ satisfies 
\begin{align*}
\frac{D^L\bar{\rho}_{\varepsilon}\oplus[\nu]_{G,\varepsilon}}{D\varepsilon\rvert_{\varepsilon=0}}(u_x)=\tilde{\nabla}^{\g}_{u_x}(\bar{\eta}\oplus[\mu]_G)-[\bar{\eta}\oplus[\mu]_G,\bar{\rho}\oplus[\nu]_G(u_x)]_G^{\g}+\omega^{\g}(\delta\sigma,T\sigma u_x)
\end{align*} 
where $\bar{\eta}\oplus [\mu]_G\in\Gamma(\bar{U},\mathrm{Ad}P\oplus(V/G))$ such that $\pi_{\Sigma,\mathrm{Ad}P\oplus(V/G)}(\bar{\eta}\oplus [\mu]_G)=\sigma$, and $\bar{\eta}\oplus [\mu]_G\vert_{\partial U}=0.$
The additional structure in $\mathrm{Ad}P\oplus(V/G)$ is the one detailed in Proposition \ref{quotientLP}. Hence,
\begin{align*}
\tilde{\nabla}^{\g}_{u_x}(\bar{\eta}\oplus[\mu]_G)&=\nabla^{\g}_{T\sigma(u_x)}(\bar{\eta}\oplus[\mu]_G)\\&=\nabla_{T\sigma(u_x)}\bar{\eta}\oplus \left( [\nabla^{(\mathcal{A},H)}]_{G,T\sigma(u_x)}[\mu]_G-[\omega]_G(T\sigma(u_x),\bar{\eta})\right) \\
&=\tilde{\nabla}_{u_x}\bar{\eta}\oplus \left( [\tilde{\nabla}^{(\mathcal{A},H)}]_{G,u_x}[\mu]_G+[\omega]_G(\bar{\eta},T\sigma(u_x))\right).
\end{align*}
In addition,
\begin{align*}
[\bar{\eta}\oplus[\mu]_G,\bar{\rho}\oplus[\nu]_G(u_x)]^{\g}_G&=[\bar{\eta},\bar{\rho}(u_x)]\oplus [\nabla^{(\mathcal{A},V)}]_{G,\bar{\eta}}[\nu]_G(u_x)\\-&[\nabla^{(\mathcal{A},V)}]_{G,\bar{\rho}(u_x)}[\mu]_G-[\omega]_G(\bar{\eta},\bar{\rho}(u_x))+[[\mu]_G,[\nu(u_x)]_G],
\end{align*}
and 
$$\omega^{\g}(\delta\sigma,T\sigma(u_x))=\tilde{B}(\delta\sigma,T\sigma(u_x))\oplus\omega(\delta\sigma,T\sigma(u_x)).$$
These last three expressions prove that the allowed variations in $J^1\Sigma\oplus (T^*X\otimes (\mathrm{Ad}Q\oplus (V/G)))$ are the same as the projection of the allowed variations of the original space and, consequently, (i) and (iii) are equivalent statements.
\end{proof}
\begin{remark}
Let $N$ be a normal subgroup of $G$, and $K=G/N$ the quotient group. We can reduce $L$ by $N$ and afterwards by $K$. Let $\mathcal{A}_N$ be a principal connection on $P\to P/N$ and $\mathcal{A}_{G/N}$ a principal connection on $P/N\to (P/N)/K$. These connections are said to be compatible with respect to $\mathcal{A}$ if for all $u\in TP$,
$$\mathcal{A}(u)=0 \Leftrightarrow \mathcal{A}_N(u)=0 \text{ and } \mathcal{A}_{G/N}(T\pi_{P/N,P}(u))=0.$$
In this case there exists a $\mathfrak{LP}$-isomorphism from $T(P/G)\oplus\mathrm{Ad}P\oplus(V/G)$ to $$T((P/N)/K)\oplus\tilde{\mathfrak{k}}\oplus(\tilde{\mathfrak{n}}\oplus(V/N))/(G/N),$$
where $\mathfrak{n}$ is the Lie algebra of $N$, $\mathfrak{k}$ is the Lie algebra of $K$ and $\tilde{\mathfrak{n}},\tilde{\mathfrak{k}}$ their respective adjoint bundles. Hence, it is equivalent to perform reduction directly than by stages. This will later be exemplified in \S\ref{rotor} below. For more details, see \cite[\S6.3]{CMR} and \cite[\S3.4]{BC}. 
\end{remark}

\section{Reconstruction in FT$\mathfrak{LP}$}\label{rec}
Given a critical section $j^1\sigma\oplus\bar{\rho}\oplus[\nu]_G$ in $\Gamma(\bar{U},J^1\Sigma\oplus (T^*X\otimes (\mathrm{Ad}Q\oplus (V/G))))$ for $l$, we investigate the existence of a critical section $j^1\rho\oplus\nu$ in $\Gamma(\bar{U},J^1P\oplus (T^*X\otimes V))$ for the unreduced Lagrangian $L$.

Let $\sigma(x)=\pi_{\Sigma,V/G}([\nu]_G(x))=\pi_{\Sigma,\mathrm{Ad}P}(\bar{\rho}(x))$. This section in $\Gamma(\bar{U},\Sigma)$ defines  the $G$-principal pull-back bundle $\sigma ^* P\to X$
$$\sigma^*P=\{(x,p)\vert\pi_{\Sigma,P}(p)=\sigma(x),x\in X, p\in P\}$$ of the $G$-principal bundle $P\to\Sigma$. In addition, $\sigma^*P$ can be identified with $P^{\sigma}=\{p\in P \vert \pi_{\Sigma,P}(p)\in \sigma(X)\}$ by $p\in P^{\sigma}\mapsto (\pi_{X,P}(p),p)\in\sigma^*P.$ As $\sigma(x)=\pi_{\Sigma,\mathrm{Ad}P}(\bar{\rho}(x))$, the section $\bar{\rho}(x)$ can be interpreted as a section of $\Gamma(\bar{U},\mathrm{Ad}P^{\sigma})$ and there is an equivariant horizontal 1-form $\omega^{\bar{\rho}}\in\Omega^1(P^{\sigma},\mathfrak{g})$ such that for all $x\in X$ and $u_x\in T_xX$,
$$\bar{\rho}(u_x)=[p,\omega^{\bar{\rho}}(u_p)]_G,$$
where $p\in P^{\sigma}$ and $u_p\in T_pP$ such that $T\pi_{X,P}(u_p)=u_x$. The connection $\mathcal{A}$ on $P\to \Sigma$ induces a connection $\mathcal{A}^{\sigma}$ on $P^{\sigma}\to X$ and recalling that the space of connections of a principal bundle is an affine space modeled over the space of 1-forms taking values in the adjoint bundle, we define a connection $\mathcal{A}^{\bar{\rho}}$ as:
$$\mathcal{A}^{\bar{\rho}}=\mathcal{A}^{\sigma}-\omega^{\bar{\rho}}.$$

%
%
%
\begin{theorem}
Let $\mathcal{A}$ be a principal connection on the on the principal bundle $P\to\Sigma$ and let $L:J^1P\oplus (T^*X\otimes V)\to\R$ be a $G$-invariant Lagrangian defined in a FT$\mathfrak{LP}$ bundle. Finally, let $l:J^1\Sigma\oplus (T^*X\otimes (\mathrm{Ad}Q\oplus (V/G)))\to\R$ be the reduced Lagrangian.

Then, if $j^1\rho\oplus\nu$ in $\Gamma(\bar{U},J^1P\oplus (T^*X\otimes V))$ satisfies the Lagrange-Poincar\'e equations given by $L$ in $J^1P\oplus (T^*X\otimes V)$, then the reduced section  $j^1\sigma\oplus\bar{\rho}\oplus[\nu]_G\in \Gamma(\bar{U},J^1\Sigma\oplus (T^*X\otimes (\mathrm{Ad}Q\oplus (V/G))))$ satisfies the Lagrange-Poincar\'e equations given by $l$ in $J^1\Sigma\oplus (T^*X\otimes (\mathrm{Ad}Q\oplus (V/G)))$ and connection $\mathcal{A}^{\bar{\rho}}$ on $P^{\sigma}\to\Sigma$ is flat.

Conversely, given a solution $j^1\sigma\oplus\bar{\rho}\oplus[\nu]_G\in\Gamma(\bar{U},J^1\Sigma\oplus (T^*X\otimes (\mathrm{Ad}Q\oplus (V/G))))$ of the Lagrange--Poincar\'e equations given by $l$ such that $\mathcal{A}^{\bar{\rho}}$ is flat and has trivial holonomy over an open set containing $\bar{U}$, there is a family $\Phi_g (j^1\rho\oplus\nu)$, $g \in G$, of solutions of the Lagrange--Poincar\'e equations given by $L$ projecting to $j^1\sigma\oplus\bar{\rho}\oplus[\nu]_G$. If the connection $\mathcal{A}^{\bar{\rho}}$ is flat one can always restrict it to an open simply connected set contained in $U$ so that its holonomy on $U$ is automatically zero. 
\end{theorem}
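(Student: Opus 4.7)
The backbone of the argument is Proposition \ref{reduction}, which already gives the equivalence between the Lagrange--Poincar\'e equations for $L$ upstairs and for $l$ downstairs. Hence in both directions the work is not about re-deriving the variational content, but about translating between the data $(\rho,\nu)$ upstairs and the triple $(\sigma,\bar{\rho},[\nu]_G)$ downstairs, with the connection $\mathcal{A}^{\bar{\rho}}$ on $P^{\sigma}\to X$ being precisely the obstruction to that translation. The central technical lemma I would prove first is the following dictionary: a section $\rho\in\Gamma(\bar U,P^{\sigma})$ satisfies $[\rho,\rho^{*}\mathcal{A}]_{G}=\bar{\rho}$ if and only if $\rho$ is horizontal for $\mathcal{A}^{\bar{\rho}}$. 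This follows by unraveling the definition of $\omega^{\bar{\rho}}$: at $p=\rho(x)$ and $u_{p}=T_{x}\rho(u_{x})$, one has $\omega^{\bar{\rho}}(T\rho(u_{x}))=\rho^{*}\mathcal{A}(u_{x})=\mathcal{A}^{\sigma}(T\rho(u_{x}))$, so $\mathcal{A}^{\bar{\rho}}(T\rho)=\mathcal{A}^{\sigma}(T\rho)-\omega^{\bar{\rho}}(T\rho)$ vanishes exactly when the equivariant 1-form $\omega^{\bar{\rho}}$ coincides with $\rho^{*}\mathcal{A}$ along $T\rho$.

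\emph{Forward direction.} Given a critical $j^{1}\rho\oplus\nu$ for $L$, Proposition \ref{reduction} gives that $j^{1}\sigma\oplus\bar{\rho}\oplus[\nu]_{G}$ solves the reduced equations for $l$. The dictionary above shows that $\rho$ is horizontal for $\mathcal{A}^{\bar{\rho}}$. Because $\rho$ is a global section of $P^{\sigma}\to\bar U$ and $\mathcal{A}^{\bar{\rho}}$ is a principal connection, the $G$-equivariant horizontal distribution is tangent to the family of translated sections $\Phi_{g}\circ\rho$, which foliates $P^{\sigma}$. Integrability of the horizontal distribution forces the curvature of $\mathcal{A}^{\bar{\rho}}$ to vanish, i.e.\ flatness (and in fact trivial holonomy on $\bar U$).

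\emph{Converse direction.} Assume $j^{1}\sigma\oplus\bar{\rho}\oplus[\nu]_{G}$ solves the reduced equations and $\mathcal{A}^{\bar{\rho}}$ is flat with trivial holonomy over $\bar U$. Trivial holonomy yields a global horizontal section $\rho\in\Gamma(\bar U,P^{\sigma})$; by the dictionary, $[\rho,\rho^{*}\mathcal{A}]_{G}=\bar{\rho}$. Any two horizontal sections differ by a constant $g\in G$, giving the $G$-parameter family $\Phi_{g}\circ\rho$. Next I lift $[\nu]_{G}$: since $V\to P$ is a vector bundle action of $G$ with $G$ acting freely, the restriction of $\pi_{V/G,V}$ to each fiber $V_{\rho(x)}$ is a linear isomorphism onto $(V/G)_{\sigma(x)}$, so there is a unique $\nu\in\Gamma(\bar U,T^{*}X\otimes_{P}V)$ projecting to $\rho$ with $[\nu]_{G}$ the given section. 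By construction $\beta_{\mathcal{A}}\circ\pi_{G}(j^{1}\rho\oplus\nu)=j^{1}\sigma\oplus\bar{\rho}\oplus[\nu]_{G}$, and Proposition \ref{reduction} applied in reverse yields that $j^{1}\rho\oplus\nu$ solves the LP equations for $L$. The concluding remark on simply connected restrictions is immediate: on a simply connected open set flatness implies trivial holonomy, so the above reconstruction applies locally.

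\emph{Expected obstacle.} The only delicate point is the dictionary step and, within it, checking that the pullback connection $\mathcal{A}^{\sigma}$ on $P^{\sigma}\to X$ genuinely agrees with $\mathcal{A}$ evaluated on tangent vectors to $P^{\sigma}\hookrightarrow P$, and that $\omega^{\bar{\rho}}$ is a well-defined $G$-equivariant horizontal 1-form independent of the representative $p$. Everything else is bookkeeping on top of Proposition \ref{reduction} and the standard fact that horizontal sections of a principal connection form a $G$-torsor when holonomy is trivial.
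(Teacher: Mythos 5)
Your proposal is correct and takes essentially the same approach as the paper: both reduce the variational content to Proposition \ref{reduction} and then show that $\bar{\rho}=[\rho,\rho^*\mathcal{A}]_G$ is equivalent to $\mathrm{Im}\,\rho$ being an integral leaf of the $\mathcal{A}^{\bar{\rho}}$-horizontal distribution, deducing flatness from integrability in one direction and using trivial holonomy to produce the horizontal sections $\Phi_g\circ\rho$ and the unique lift of $[\nu]_G$ in the other. Your explicit ``dictionary lemma'' is just a packaged form of the paper's direct computation $\mathcal{A}^{\bar{\rho}}(T_x\rho(v_x)+\xi^P_p)=\xi$.
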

\begin{proof}
Suppose that $j^1\sigma\oplus\bar{\rho}\oplus[\nu]_G$ is the projection of $j^1\rho\oplus\nu$, in particular, $\bar{\rho}=[\rho,\rho^*\mathcal{A}]_G$, where $\rho$ is a section of $P\to X$. Observe that for all $p=\rho(x)\in P^{\sigma}$, $T_{\rho(x)}P^{\sigma}=T_x\rho(T_xX)\oplus \mathrm{ker}T_{\rho(x)}\pi_{\Sigma,P}$ and any $v_p\in T_pP^{\sigma}$ can be written as $v_p=T_x \rho(v_x)+\xi^P_p$, where $v_x\in T_xX$ and $\xi\in\mathfrak{g}$. Then, 
\begin{align*}
\mathcal{A}^{\bar{\rho}}(v_p)&=\mathcal{A}^{\sigma}(v_p)-\omega^{\bar{\rho}}(v_p)=\mathcal{A}(v_p)-\mathcal{A}(T_x\rho(T_p\pi_{X,P}(v_p)))\\&=\mathcal{A}(T_x\rho(v_x))+\mathcal{A}(\xi^P_p)-\mathcal{A}(T_x\rho(v_x))=\xi.
\end{align*}
Consequently, the horizontal subbundle defined by $\mathcal{A}^{\bar{\rho}}$ is given by $$H^{\mathcal{A}^{\bar{\rho}}}_{\rho(x)}=T_x\rho(T_xX),$$ the horizontal distribution is integrable, the integral leaves are given by $$\{\Phi_g(\rho(x))\vert x\in X,g\in G\}=\Phi_g(\mathrm{Im}\rho),$$ and $\mathcal{A}^{\bar{\rho}}$ is a flat connection on $P^{\sigma}\to X$.

Conversely, given $j^1\sigma\oplus\bar{\rho}\oplus[\nu]_G$ and $\sigma(x)=\pi_{\Sigma,V/G}([\nu]_G(x))=\pi_{\Sigma,\mathrm{Ad}P}(\bar{\rho}(x))$. Suppose that $\mathcal{A}^{\bar{\rho}}$ is flat and has trivial holonomy over an open set containing $\bar{U}$. The horizontal distribution of $\mathcal{A}^{\bar{\rho}}$ is integrable 
and the leaves cover the base. Since the holonomy is trivial each fiber intersects the leaf exactly once, that is, they are sections of $P^{\sigma}\to X$. Thus there is a family $\Phi_g(\rho(x))$ of sections of $P\to X$ that projects to $\sigma$ via $\pi_{X,\Sigma}$ and such that
$$[\rho,\rho^*\mathcal{A}]_G=[\rho,\rho^*\mathcal{A}^{\rho}+\omega^{\bar{\rho}}]_G=\bar{\rho}.$$
Furthermore, there is a unique section $\nu(x)$ of $T^*X\oplus V\to X$ such that 
$$\pi_{T^*X\oplus (V/G),T^*X\oplus V}(\nu(x))=[\nu](x),\hspace{10mm} \pi_{P,T^*X\oplus V}(\nu(x))=\rho(x).$$ 
In addition, $\Phi_g(\nu(x))$ is the unique section of $T^*X\oplus V\to X$ such that 
$$\pi_{T^*X\oplus (V/G),T^*X\oplus V}\Phi_g(\nu(x))=[\nu](x),\hspace{10mm} \pi_{P,T^*X\oplus V}\Phi_g(\nu(x))=\Phi_g(\rho(x)).$$
Thus, the family of sections $\Phi_g (j^1\rho\oplus\nu)=j^1\Phi_g(\rho)\oplus\Phi_g(\nu)$, $g \in G$, projects to $j^1\sigma\oplus\bar{\rho}\oplus[\nu]_G$ and, by equivalence (iv)$\Rightarrow$ (ii) in Proposition \ref{reduction}, they are solutions of the Lagrange--Poincar\'e equations given by $L$.
\end{proof}

The curvature of connection $\mathcal{A}^{\bar{\rho}}$  can be rewritten in terms of the curvature $B$ of $\mathcal{A}$. Then, the  flatness of $\mathcal{A}^{\bar{\rho}}$ gives the following  \textit{reconstruction condition}
\begin{equation}\label{eq:reconstruction}
B-\mathrm{d}^{\nabla^{\mathcal{A}}}\omega^{\bar{\rho}}-\omega^{\bar{\rho}}\wedge\omega^{\bar{\rho}}=0,
\end{equation} 
where $\mathrm{d}^{\nabla^{\mathcal{A}}}$ is the exterior derivative of $\mathfrak{g}$-valued forms induced by the covariant derivative on $\mathrm{Ad} P$, $\nabla^{\mathcal{A}}$, and the Cartan formula.

\section{The Noether drift law in the FT$\mathfrak{LP}$ and its reduction } \label{noether}
In this section, we define a Noether current for symmetries in FT$\mathfrak{LP}$ bundles and prove that is not a constant of motion. Instead there is a drift of this current that reduces to the new vertical equation appearing in each step of the reduction. 

\begin{definition}
Let $L:J^1P\oplus(T^*X\otimes V)\to\R$ be a Lagrangian and $G$ a Lie group acting freely and properly on $J^1P\oplus(T^*X\otimes V)$ by isomorphisms in the category FT$\mathfrak{LP}$. We define the \textit{Noether current} as the function $$J:J^1P\oplus(T^*X\otimes_P V)\to TX\otimes_P\mathfrak{g}^*$$ such that for all $j^1\rho\oplus\nu\in J^1P\oplus(T^*X\otimes_P V)$
$$J(j^1\rho\oplus\nu)((\rho,\alpha)\otimes (\rho,\eta))= \frac{\delta L}{\delta j^1\rho}(j^1\rho\oplus\nu)((\rho,\alpha)\otimes \eta^P_{\rho}),$$
where $(\rho,\alpha)\otimes (\rho,\eta)\in T^*X\otimes_P  \mathfrak{g}$.
\end{definition}
\begin{proposition}
Suppose that $L$ is $G$-invariant and let $j^1\rho\oplus\nu(x)$ be a section satisfying the Lagrange--Poincar\'e equations. Then the Noether current satisfies 
\begin{equation} \label{drift}
\mathrm{div}\left( (j^1\rho\oplus\nu)^*J(\rho(x),\eta)\right) =-\left\langle \frac{\delta L}{\delta \nu}, \omega(T_x\rho(\bullet),\eta^P_p)+\eta^V_{\nu(\bullet)}\right\rangle,
\end{equation}
where $\omega(T_x\rho(\bullet),\eta^P_p)+\eta^V_{\nu(\bullet)}\in T^*X\otimes V$ acts by replacing the slot $\bullet$ by an arbitrary element in $TX$. This is called the \emph{Noether drift law} of $L$.
\end{proposition}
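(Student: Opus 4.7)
First, I would exploit the $G$-invariance of $L$. Since $L\circ\Phi_g=L$, differentiating at $g=\exp(t\eta)$ and $t=0$ gives a pointwise identity. Because each $\Phi_g$ is an $FT\mathfrak{LP}(X)$-isomorphism, the definition of a morphism makes it decompose as $j^1(\Phi_g)_0\oplus(\mathrm{id}_{T^*X}\otimes\bar{\Phi}_g)$, so the infinitesimal variations are
$$\delta\rho=\eta^P_\rho,\qquad \delta j^1\rho(u_x)=\tilde{\nabla}^P_{u_x}\eta^P_\rho+T^P(\eta^P_\rho,T_x\rho(u_x)),\qquad \delta\nu=\eta^V_\nu,$$
the formula for $\delta j^1\rho$ being precisely \eqref{varjp} applied to the vertical variation $\delta\rho=\eta^P_\rho$. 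Plugging these into the horizontal/vertical split of the total derivative of $L$ used in the proof of Proposition \ref{LPeq} yields the pointwise invariance identity
$$0=\left\langle\frac{\delta L}{\delta\rho},\eta^P_\rho\right\rangle+\left\langle\frac{\delta L}{\delta j^1\rho},\tilde{\nabla}^P\eta^P_\rho+T^P(\eta^P_\rho,T\rho)\right\rangle+\left\langle\frac{\delta L}{\delta\nu},\eta^V_\nu\right\rangle.$$

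Second, I would isolate the divergence term by the defining property of $\mathrm{div}^P$:
$$\left\langle\frac{\delta L}{\delta j^1\rho},\tilde{\nabla}^P\eta^P_\rho\right\rangle=\mathrm{div}\!\left\langle\frac{\delta L}{\delta j^1\rho},\eta^P_\rho\right\rangle-\left\langle\mathrm{div}^P\frac{\delta L}{\delta j^1\rho},\eta^P_\rho\right\rangle,$$
and observe that $\langle\delta L/\delta j^1\rho,\eta^P_\rho\rangle$ is exactly $(j^1\rho\oplus\nu)^*J(\rho,\eta)$ by the definition of the Noether current.

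Third, I would substitute the horizontal Lagrange--Poincar\'e equation from Proposition \ref{LPeq}, namely
$$\frac{\delta L}{\delta\rho}-\mathrm{div}^P\frac{\delta L}{\delta j^1\rho}-\left\langle\frac{\delta L}{\delta j^1\rho},i_{T\rho}T^P\right\rangle=\left\langle\frac{\delta L}{\delta\nu},i_{T\rho}\omega\right\rangle,$$
evaluated on the vertical vector $\eta^P_\rho$. This collapses the $\delta L/\delta\rho$, $\mathrm{div}^P$ and torsion contributions in the identity of step one into a single term $-\langle\delta L/\delta\nu,\omega(T\rho(\bullet),\eta^P_\rho)\rangle$, and a rearrangement gives
$$\mathrm{div}\!\left((j^1\rho\oplus\nu)^*J(\rho,\eta)\right)=-\left\langle\frac{\delta L}{\delta\nu},\omega(T\rho(\bullet),\eta^P_\rho)+\eta^V_\nu\right\rangle,$$
which is the drift law. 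Note that the vertical Lagrange--Poincar\'e equation is not invoked anywhere.

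The main technical point I expect to be careful about is the identification of the infinitesimal variation on $T^*X\otimes V$ with $\eta^V_\nu$: because the action fixes $T^*X$, the generator at $\nu(x)$ lives in the vertical tangent space of a vector bundle fiber and must be identified canonically with the fiber itself in order to pair it with $\delta L/\delta\nu\in TX\otimes V^*$. A secondary bookkeeping issue is the antisymmetry/sign handling when the operators $i_{T\rho}\omega$ and $i_{T\rho}T^P$ are contracted against the vertical argument $\eta^P_\rho$, which is what ultimately produces the correct ordering $\omega(T\rho(\bullet),\eta^P_\rho)$ (rather than its negative) in the final formula.
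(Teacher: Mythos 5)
Your proposal is correct and follows essentially the same route as the paper's proof: the infinitesimal invariance identity obtained by differentiating $L\circ\Phi_{\exp(\varepsilon\eta)}=L$, the defining property of $\mathrm{div}^P$ applied to $\bigl\langle \delta L/\delta j^1\rho,\eta^P_\rho\bigr\rangle=(j^1\rho\oplus\nu)^*J$, and substitution of the horizontal Lagrange--Poincar\'e equation, with the two torsion contributions cancelling by antisymmetry of $T^P$. Your remark that the vertical equation is never invoked matches the paper's computation as well.
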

\begin{proof}
As the Lagrangian is $G$-invariant, for all $\eta\in\mathfrak{g}$,
\begin{align*}
0=&\mathrm{d}L\left( \frac{d}{d\varepsilon}\biggr\rvert_{\varepsilon=0}\mathrm{exp}(\varepsilon\eta)(j^1\rho\oplus\nu)\right) =\left\langle \frac{\delta L}{\delta\rho},\frac{d}{d\varepsilon}\biggr\rvert_{\varepsilon=0}\mathrm{exp}(\varepsilon\nu)\rho\right\rangle \\
&+\left\langle \frac{\delta L}{\delta j^1\rho},\frac{D^{\nabla^{J^1P}}}{D\varepsilon}\biggr\rvert_{\varepsilon=0}\mathrm{exp}(\varepsilon\eta)j^1\rho\right\rangle+\left\langle \frac{\delta L}{\delta\nu},\frac{D^{L}}{D\varepsilon}\biggr\rvert_{\varepsilon=0}\mathrm{exp}(\varepsilon\eta)\nu\right\rangle.
\end{align*}
Since $\mathrm{exp}(\varepsilon\eta)j^1\rho=j^1(\mathrm{exp}(\varepsilon\eta)\rho)$, it is the lifted variation of the vertical variation $\mathrm{exp}(\varepsilon\eta)\rho$ of $\rho$. Then, for all $u_x\in TX$;
\begin{align*}
\frac{D^{\nabla^{J^1P}}\mathrm{exp}(\varepsilon\eta)j^1\rho}{D\varepsilon}\biggr\rvert_{\varepsilon=0}(u_x)&=\frac{D^{\nabla^P}\mathrm{exp}(\varepsilon\eta)j^1\rho(u_x)}{D\varepsilon}\biggr\rvert_{\varepsilon=0}\\&=\tilde{\nabla}^P_{u_x}\eta_{\rho}^P+T^P(\eta_{\rho}^P,T_x\rho(u_x))
\end{align*}
Furthermore, $\mathrm{exp}(\varepsilon\eta)\nu$ is a vertical variation and 
$$\frac{D^{L}\mathrm{exp}(\varepsilon\eta)\nu}{D\varepsilon}\biggr\rvert_{\varepsilon=0}(u_x)=\frac{D^{\nabla}\mathrm{exp}(\varepsilon\eta)\nu(u_x)}{D\varepsilon}\biggr\rvert_{\varepsilon=0}=\eta^V_{\nu(u_x)}.$$
Then, the $G$-invariance of $L$ can be written as,
\begin{align}\label{invariant}
0=\left\langle \frac{\delta L}{\delta\rho},\eta^
P_{\rho}\right\rangle +\left\langle \frac{\delta L}{\delta j^1\rho}, \tilde{\nabla}^P_{u_x}\eta_{\rho}^P+T^P(\eta_{\rho}^P,T_x\rho(u_x))\right\rangle+\left\langle \frac{\delta L}{\delta\nu},\eta^V_{\nu(u_x)}\right\rangle.
\end{align}
Finally, 
\begin{align*}
\mathrm{div}&\left( (j^1\rho\oplus\nu)^*J(\rho(x),\eta)\right)=\mathrm{div}\left\langle \frac{\delta L}{\delta j^1\rho}(j^1\rho\oplus\nu(x)),\eta^P_{\rho(x)}\right\rangle \\&=\left\langle \mathrm{div}^P\frac{\delta L}{\delta j^1\rho},\eta^P_{\rho}\right\rangle +\left\langle \frac{\delta L}{\delta j^1\rho},\tilde{\nabla}^P\eta^P_{\rho}\right\rangle \\
&=\left\langle \mathrm{div}^P\frac{\delta L}{\delta j^1\rho},\eta^P_{\rho}\right\rangle-\left\langle \frac{\delta L}{\delta\rho},\eta^P_{\rho}\right\rangle -\left\langle \frac{\delta L}{\delta j^1\rho}, T^P(\eta_{\rho}^P,T_x\rho(\bullet))\right\rangle-\left\langle \frac{\delta L}{\delta\nu},\eta^V_{\nu(\bullet)}\right\rangle \\
&=-\left\langle \frac{\delta L}{\delta\nu},\omega(T\rho(\bullet),\eta^P_p)+\eta^V_{\nu(\bullet)}\right\rangle, 
\end{align*}
where we have used relation \ref{invariant} and Lagrange--Poincar\'e equations.
\end{proof}

The Noether current is $G$-equivariant so that it defines a bundle map in the quotient by $G$
$$j:J^1\Sigma\oplus(T^*X\otimes_\Sigma (\mathrm{Ad}P\oplus(V/G)))\to TX\otimes_P\mathrm{Ad}P^*$$ such that for all $j^1\sigma\oplus\bar{\rho}\oplus[\nu]_G\in J^1\Sigma\oplus(T^*X\otimes_\Sigma (\mathrm{Ad}P\oplus(V/G)))$, and all $(\sigma,\alpha)\otimes \bar{\eta}\in T^*X\otimes_{\Sigma} \mathrm{Ad}P$,
\begin{align*}
&j(j^1\sigma\oplus\bar{\rho}\oplus[\nu]_G)((\sigma,\alpha)\otimes \bar{\eta})=\\&= J(j^1\rho\oplus\nu)((\rho,\alpha)\otimes (\rho,\eta))=\frac{d}{d\epsilon}\biggr\rvert_{\epsilon=0}L(j^1\rho+\epsilon((\rho,\alpha)\otimes\eta^P_{\rho})\oplus\nu)\\&=\frac{d}{d\epsilon}\biggr\rvert_{\epsilon=0}l(j^1\sigma\oplus(\bar{\rho}+\epsilon((\sigma,\alpha)\otimes\bar{\eta})\oplus[\nu]_G)=\frac{\delta l}{\delta\bar{\rho}}(j^1\sigma\oplus\bar{\rho}\oplus[\nu]_G)((\sigma,\alpha)\otimes \bar{\eta}).
\end{align*} 
Consequently, the reduced Noether current $j$ coincides with
$\frac{\delta l}{\delta\bar{\rho}}.$
\begin{proposition}
The drift of the Noether current along critical sections $j^1\sigma\oplus\bar{\rho}\oplus[\nu]_G$ given by equation \ref{drift} projects to the equation
\begin{equation}
\left\langle \mathrm{div}^{\mathcal{A}}\frac{\delta l}{\delta\bar{\rho}},\bar{\eta}\right\rangle =\left\langle \mathrm{ad}^*_{\bar{\rho}}\frac{\delta l}{\delta\bar{\rho}},\bar{\eta}\right\rangle -\left\langle \frac{\delta l}{\delta[\nu]_G},[\nabla^{(\mathcal{A},V)}]_{G,\bar{\eta}}[\nu]_G+[\omega]_G(T\sigma\oplus\bar{\rho},\bar{\eta})\right\rangle,
\end{equation}\label{newLP}
where $\mathrm{div}^{\mathcal{A}}$ is the divergence of $\mathrm{Ad}P^*$-valued vector fields induced by connection $\nabla^{\mathcal{A}}$ in $\mathrm{Ad}P$ and $\bar{\eta}(x)=[\rho(x),\eta]_G$
\end{proposition}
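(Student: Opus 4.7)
The plan is to project the Noether drift law \ref{drift} from the unreduced bundle to the reduced one, using the identification $j = \delta l/\delta \bar{\rho}$ already established just above the statement, together with the compatibility of the additional structures under the quotient stated in Proposition \ref{quotientLP} and the definitions of the vertical quotient connection in \S\ref{quotientvector}. The key idea is that the left-hand side of \ref{drift} is a divergence of a pairing between $\mathrm{Ad}P^*$-valued data and $\mathrm{Ad}P$-valued data, so expanding it via the Leibniz rule for $\mathrm{div}^{\mathcal{A}}$ will produce the desired $\mathrm{div}^{\mathcal{A}}$ and $\mathrm{ad}^*$ terms on the LHS of the claimed equation, while the RHS of \ref{drift} projects directly, using $G$-equivariance, to the RHS of the claimed equation.

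First I would write $(j^1\rho\oplus\nu)^*J(\rho(x),\eta) = \langle \delta l/\delta\bar\rho, \bar\eta\rangle$ where $\bar\eta(x) = [\rho(x),\eta]_G$, and apply the Leibniz rule
\[
\mathrm{div}\langle \tfrac{\delta l}{\delta\bar\rho},\bar\eta\rangle = \langle \mathrm{div}^{\mathcal{A}}\tfrac{\delta l}{\delta\bar\rho},\bar\eta\rangle + \langle \tfrac{\delta l}{\delta\bar\rho},\tilde{\nabla}^{\mathcal{A}}\bar\eta\rangle.
\]
The critical calculation is $\tilde{\nabla}^{\mathcal{A}}_{u_x}\bar\eta$: since $\eta$ is constant and the adjoint-bundle covariant derivative reads $D[q,\xi]_G/Dt = [q,\dot\xi-[\mathcal{A}(\dot q),\xi]]_G$, one obtains $\tilde{\nabla}^{\mathcal{A}}_{u_x}\bar\eta = -[\rho(x),[\rho^*\mathcal{A}(u_x),\eta]]_G = -[\bar\rho(u_x),\bar\eta]$, so the second term above becomes $-\langle \mathrm{ad}^*_{\bar\rho}\tfrac{\delta l}{\delta\bar\rho},\bar\eta\rangle$ by the definition of $\mathrm{ad}^*$ in \S\ref{FTLPmec}. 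Rearranging produces the LHS of the target equation.

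Next I would project the RHS of \ref{drift}. For the vertical generator term, the identification $[\eta^V_{\nu(x)}]_G = [\nabla^{(\mathcal{A},V)}]_{G,\bar\eta}[\nu]_G(x)$ is immediate from the very definition of the vertical quotient connection given at the beginning of \S6, once one notes that $\pi_{V,P}(\nu(x)) = \rho(x)$ and $\bar\eta = [\rho,\eta]_G$. For the $\omega$ term, I use that under $\alpha_{\mathcal{A}}$ in \eqref{identification}, $[T\rho(u_x)]_G \leftrightarrow T\sigma(u_x)\oplus\bar\rho(u_x)$ and $[\eta^P_\rho]_G \leftrightarrow 0 \oplus \bar\eta$, so by the equivariance of $\omega$ and the definition of $[\omega]_G$ (Proposition \ref{quotientLP}) one has $[\omega(T\rho(u_x),\eta^P_\rho)]_G = [\omega]_G(T\sigma(u_x)\oplus\bar\rho(u_x),\bar\eta)$. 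Finally, $G$-invariance of $L$ gives $\langle \delta L/\delta\nu,\beta\rangle = \langle \delta l/\delta[\nu]_G,[\beta]_G\rangle$ on projectable arguments, assembling the claimed RHS.

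The main technical obstacle will be checking signs and pointwise well-definedness of $[\nabla^{(\mathcal{A},V)}]_{G,\bar\eta}[\nu]_G$ when $\bar\eta$ is only defined along $\sigma(X)$ rather than globally on $\Sigma$; this is already addressed at the start of \S6 by the observation that the defining formula depends only on pointwise data. A secondary subtlety is that \ref{drift} is an equation in $\Omega^0(X)$ obtained by pulling back along $(j^1\rho\oplus\nu)$, whereas the claimed identity is written fiberwise in $\mathrm{Ad}P^*$; one must therefore argue that the pulled-back equation holds for all choices of $\eta \in \mathfrak{g}$ and hence projects to an equality of sections of $\mathrm{Ad}P^* \to \Sigma$, which is automatic from $G$-equivariance of both sides.
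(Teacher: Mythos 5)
Your proposal is correct and follows essentially the same route as the paper's proof: expand $\mathrm{div}\langle j,\bar\eta\rangle$ by the Leibniz rule for $\mathrm{div}^{\mathcal{A}}$, use $\tilde{\nabla}^{\mathcal{A}}\bar\eta=-[\bar\rho,\bar\eta]$ for constant $\eta$ to produce the $\mathrm{ad}^*_{\bar\rho}$ term, and project the right-hand side of the drift law via $[\eta^V_{\nu}]_G=[\nabla^{(\mathcal{A},V)}]_{G,\bar\eta}[\nu]_G$ and $[\omega(T\rho,\eta^P_\rho)]_G=[\omega]_G(T\sigma\oplus\bar\rho,\bar\eta)$. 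The additional remarks on pointwise well-definedness and on quantifying over all $\eta\in\mathfrak{g}$ are sound and only make explicit what the paper leaves implicit.
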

\begin{proof}
We rewrite the left hand side of equation \ref{drift}:
\begin{align*}
 \mathrm{div}&\left\langle J(j^1\rho\oplus\nu),(\rho,\eta)\right\rangle =\mathrm{div}\left\langle j(j^1\sigma\oplus\bar{\rho}\oplus[\nu]_G),\bar{\eta}\right\rangle\\
&=\left\langle \mathrm{div}^{\mathcal{A}}j(j^1\sigma\oplus\bar{\rho}\oplus[\nu]_G),\bar{\eta}\right\rangle+\left\langle j(j^1\sigma\oplus\bar{\rho}\oplus[\nu]_G),\tilde{\nabla}^{\mathcal{A}}\bar{\eta}\right\rangle\\
&=\left\langle \mathrm{div}^{\mathcal{A}}\frac{\delta l}{\delta\bar{\rho}}(j^1\sigma\oplus\bar{\rho}\oplus[\nu]_G),\bar{\eta}\right\rangle-\left\langle \frac{\delta l}{\delta\bar{\rho}}(j^1\sigma\oplus\bar{\rho}\oplus[\nu]_G),[\bar{\rho},\bar{\eta}]\right\rangle\\
&=\left\langle \mathrm{div}^{\mathcal{A}}\frac{\delta l}{\delta\bar{\rho}}(j^1\sigma\oplus\bar{\rho}\oplus[\nu]_G)-\mathrm{ad}^*_{\bar{\rho}}\frac{\delta l}{\delta\bar{\rho}}(j^1\sigma\oplus\bar{\rho}\oplus[\nu]_G),\bar{\eta}\right\rangle.
\end{align*}
On the other hand, the right hand side of equation \ref{drift} projects to
$$ -\left\langle \frac{\delta l}{\delta[\nu]_G},[\omega]_G(T\sigma(\bullet)\oplus\bar{\rho}(\bullet),\bar{\eta})+[\eta^V_{\nu(\bullet)}]_G\right\rangle,$$
and we conclude by observing that $[\eta^V_{\nu(\bullet)}]_G=[\nabla^{(\mathcal{A},V)}]_{G,\bar{\eta}}[\nu]_G(\bullet).$
\end{proof}

The vertical Lagrange-Poincar\'e equations, 
$$\mathrm{div}^{\g}\left( \frac{\delta l}{\delta\bar{\rho}}\oplus\frac{\delta l}{\delta[\nu]_G}\right) -\mathrm{ad}^*_{\bar{\rho}\oplus[\nu]_G}\left( \frac{\delta l}{\delta\bar{\rho}}\oplus\frac{\delta l}{\delta[\nu]_G}\right)=0,$$ associated to the reduced bundle $J^1\Sigma\oplus(T^*X\oplus(\mathrm{Ad}P\oplus (V/G)))$ is an equation in $\mathrm{Ad}P^*\oplus(V/G)^*$ acting on vectors $\bar{\eta}\oplus[u]_G\in\mathrm{Ad}P\oplus(V/G)$. We can decompose these equations restricting to each of the factors on $\mathrm{Ad}P\oplus(V/G)$. First, 
\begin{align*}
&\left\langle \mathrm{div}^{\g}\left( \frac{\delta l}{\delta\bar{\rho}}\oplus\frac{\delta l}{\delta[\nu]_G}\right),\bar{\eta}\oplus[u]_G\right\rangle \\&=\mathrm{div}\left\langle \left( \frac{\delta l}{\delta\bar{\rho}}\oplus\frac{\delta l}{\delta[\nu]_G}\right),\bar{\eta}\oplus[u]_G\right\rangle-\left\langle \left( \frac{\delta l}{\delta\bar{\rho}}\oplus\frac{\delta l}{\delta[\nu]_G}\right),\tilde{\nabla}^{\g}(\bar{\eta}\oplus[u]_G)\right\rangle \\
&=\mathrm{div}\left\langle \left( \frac{\delta l}{\delta\bar{\rho}}\oplus\frac{\delta l}{\delta[\nu]_G}\right),\bar{\eta}\oplus[u]_G\right\rangle-\left\langle \frac{\delta l}{\delta\bar{\rho}},\tilde{\nabla}^{\mathcal{A}}\bar{\eta}\right\rangle -\left\langle \frac{\delta l}{\delta[\nu]_G},[\tilde{\nabla}^{(\mathcal{A},H)}]_G[u]_G\right\rangle\\ &\phantom{=}+\left\langle \frac{\delta l}{\delta[\nu]_G},[\omega]_G(T\sigma,\bar{\eta})\right\rangle\\
&=\mathrm{div}\left\langle \left( \frac{\delta l}{\delta\bar{\rho}}\oplus\frac{\delta l}{\delta[\nu]_G}\right),\bar{\eta}\oplus[u]_G\right\rangle-\mathrm{div}\left\langle\frac{\delta l}{\delta\bar{\rho}},\bar{\eta}\right\rangle +\left\langle \mathrm{div}^{\mathcal{A}}\frac{\delta l}{\delta\bar{\rho}},\bar{\eta}\right\rangle \\&\phantom{=}-\mathrm{div}\left\langle\frac{\delta l}{\delta[\nu]_G},[u]_G\right\rangle +\left\langle \mathrm{div}^{(\mathcal{A},H)}\frac{\delta l}{\delta [\nu]_G},[u]_G\right\rangle+\left\langle \frac{\delta l}{\delta[\nu]_G},[\omega]_G(T\sigma,\bar{\eta})\right\rangle \\
&=\left\langle \mathrm{div}^{\mathcal{A}}\frac{\delta l}{\delta\bar{\rho}},\bar{\eta}\right\rangle+\left\langle \mathrm{div}^{(\mathcal{A},H)}\frac{\delta l}{\delta [\nu]_G},[u]_G\right\rangle+\left\langle \frac{\delta l}{\delta[\nu]_G},[\omega]_G(T\sigma,\bar{\eta})\right\rangle 
\end{align*}
On the other hand;
\begin{align*}
&\left\langle \mathrm{ad}^*_{\bar{\rho}\oplus[\nu]_G}\left( \frac{\delta l}{\delta\bar{\rho}}\oplus\frac{\delta l}{\delta[\nu]_G}\right),\bar{\eta}\oplus[u]_G\right\rangle=\left\langle \frac{\delta l}{\delta\bar{\rho}}\oplus\frac{\delta l}{\delta[\nu]_G},[\bar{\rho}\oplus[\nu]_G,\bar{\eta}\oplus[u]_G]\right\rangle\\
&\phantom{=}= \left\langle \frac{\delta l}{\delta\bar{\rho}},[\bar{\rho},\bar{\eta}]\right\rangle \\&\phantom{==}+\left\langle \frac{\delta l}{\delta[\nu]_G},
[\nabla^{(\mathcal{A},V)}]_{G,\bar{\rho}}[u]_G-[\nabla^{(\mathcal{A},V)}]_{G,\bar{\eta}}[\nu]_G-[\omega]_G(\bar{\rho},\bar{\eta})+[[\nu]_G,[u]_G]\right\rangle\\
&\phantom{=}= \left\langle \mathrm{ad}^*_{\bar{\rho}}\frac{\delta l}{\delta\bar{\rho}},\bar{\eta}\right\rangle +\left\langle \mathrm{ad}^*_{[\nu]_G}\frac{\delta l}{\delta[\nu]_G},[u]_G\right\rangle\\&\phantom{==} +\left\langle \frac{\delta l}{\delta[\nu]_G},[\nabla^{(\mathcal{A},V)}]_{G,\bar{\rho}}[u]_G-[\nabla^{(\mathcal{A},V)}]_{G,\bar{\eta}}[\nu]_G-[\omega]_G(\bar{\rho},\bar{\eta})\right\rangle. 
\end{align*}
Thus, the vertical Lagrange--Poincar\'e equations restricted to $\mathrm{Ad}P$ are
$$\left\langle \mathrm{div}^{\mathcal{A}}\frac{\delta l}{\delta\bar{\rho}},\bar{\eta}\right\rangle=\left\langle \mathrm{ad}^*_{\bar{\rho}}\frac{\delta l}{\delta\bar{\rho}},\bar{\eta}\right\rangle -\left\langle \frac{\delta l}{\delta[\nu]_G},[\nabla^{(\mathcal{A},V)}]_{G,\bar{\eta}}[\nu]_G+[\omega]_G(T\sigma\oplus\bar{\rho},\bar{\eta})\right\rangle,$$
which according to Proposition \ref{drift}  coincides with the drift of the Noether current, while the vertical Lagrange--Poincar\'e equations restricted to $V/G$ are 
$$\left\langle \mathrm{div}^{(\mathcal{A},H)}\frac{\delta l}{\delta [\nu]_G},[u]_G\right\rangle=\left\langle \mathrm{ad}^*_{[\nu]_G}\frac{\delta l}{\delta[\nu]_G},[u]_G\right\rangle +\left\langle \frac{\delta l}{\delta[\nu]_G},[\nabla^{(\mathcal{A},V)}]_{G,\bar{\rho}}[u]_G\right\rangle, $$ 
obtained from the projections of the vertical Lagrange-Poincar\'e equations induced by $L$.
\begin{remark}
In the special case $V=0$, the drift law becomes a conservation law expressed as a vanishing of a divergence. Indeed, we recover the Noether theorem for covariant invariant Lagrangians. In addition, the conservation of this current is equivalent to the vertical Lagrange-Poincar\'e equation of the reduced Lagrangian:
$$ \mathrm{div}^{\mathcal{A}}\frac{\delta l}{\delta\bar{\rho}}- \mathrm{ad}^*_{\bar{\rho}}\frac{\delta l}{\delta\bar{\rho}}=0.$$
\end{remark}

\section{The molecular strand with rotors}\label{rotor}
In this section we will discuss a problem of strand dynamics as an example of the theory of reduction by stages. Our model is called the molecular strand with rotors and consists of a mobile base strand repeating the same configuration with different orientation. In turn, this configuration will have a moving piece that can rotate, called rotor. In this example we will suppose that each configuration has three rotors, each in the principal axes. However, the case with only one rotor or multiple rotors non-necessarily along the principal axes follows directly from our description. The   particular case of this model when there are no rotors has been the object of research, for example, in \cite{Mol Strand} following the same covariant Lagrangian approach.

The principal bundle that we shall use has $X=\R^2$ as the base space and $P=X\times\R^3\times SO(3)\times \sS^1\times \sS^1\times \sS^1$ as the total space. The variables in $\mathbb{R}^2$ willl be denoted by $(t,s)$, the first one can be thought as the time whereas the second is the parameter of the stradt. The Lie group $SO(3)\times \sS^1\times \sS^1\times \sS^1$ acts on $P$ since for any $(x,r,\Lambda,\theta)\in X\times\R^3\times SO(3)\times \sS^1\times \sS^1\times \sS^1$, where $x=(s,t)\in\R^2$ and $\theta=(\theta_1,\theta_2,\theta_3)\in\sS^1\times \sS^1\times \sS^1$, the element $(\Gamma,\alpha)\in SO(3)\times \sS^1\times \sS^1\times \sS^1$ acts by
$$(\Gamma,\alpha)\cdot(x,r,\Lambda,\theta)=(x,\Gamma r,\Gamma\Lambda,\theta+\alpha).$$
We will first reduce by the normal subgroup $SO(3)$ and then by $\sS^1\times \sS^1\times \sS^1$. 

\subsection{First reduction} We denote $\Sigma=P/SO(3)=X\times\R^3\times\sS^1\times \sS^1\times \sS^1$, the first quotient space, and $(x,\rho,\theta)\in\Sigma$. The projection is given by 
\begin{align*}
\pi_{\Sigma,P}:X\times\R^3\times SO(3)\times \sS^1\times \sS^1\times \sS^1\to &X\times\R^3\times SO(3)\times \sS^1\times \sS^1\times \sS^1\\
(x,r,\Lambda,\theta)\mapsto &(x,\rho=\Lambda^{-1}r,\theta)
\end{align*}
and its derivative is
\begin{align*}
T\pi_{\Sigma,P}:TP\to &T\Sigma\\
(x,r,\Lambda,\theta,v_x,v_r,v_{\Lambda},v_{\theta})\mapsto &(x,\rho=\Lambda^{-1}r,\theta,v_x,v_{\rho}=\Lambda^{-1}v_r-\Lambda^{-1}v_{\Lambda}\rho,v_{\theta}).
\end{align*}
To identify $J^1P/SO(3)$ as a FT$\mathfrak{LP}$ bundle we need a connection $\mathcal{A}$ on $P\to\Sigma$. In terms of the Maurer-Cartan connection, any connection $\mathcal{A}$ can be written as
$$\mathcal{A}(x,r,\Lambda,\theta,v_x,v_r,v_{\Lambda},v_{\theta})=v_{\Lambda}\Lambda^{-1}+\tilde{\mathcal{A}}(x,r,\Lambda,\theta,v_x,v_r,v_{\Lambda},v_{\theta})$$
such that for any $\eta\in \so (3)$, $\tilde{\mathcal{A}}(\eta^P_{x,r,\Lambda,\theta})=0$ and for all $v\in TP$, $\tilde{\mathcal{A}}(\Gamma v)=\mathrm{Ad}_{\Gamma}\tilde{\mathcal{A}}(v)$. In addition, since $P\to X$ is trivial, we can trivialize $\mathrm{Ad}P$ as $\Sigma\times\so (3)$ via the identification $$[(x,r,\Lambda,\theta),\zeta]_{SO(3)}=[(x,\Lambda^{-1}r,e,\theta),\mathrm{Ad}_{\Lambda^{-1}}\zeta]_{SO(3)}=(x,\Lambda^{-1}r,\theta,\mathrm{Ad}_{\Lambda^{-1}}\zeta).$$ Then, the covariant derivative in  $\mathrm{Ad}P$
\begin{align*}
&\frac{D^{\mathcal{A}}}{D\tau}[x(\tau),r(\tau),\Lambda(\tau),\theta(\tau),\zeta(\tau)]_{SO(3)}=\\&[x(\tau),r(\tau),\Lambda(\tau),\theta(\tau),\dot{\zeta}(\tau)-(\Lambda_{\tau}\Lambda^{-1}+\tilde{\mathcal{A}}(x,r,\Lambda,\theta,x_{\tau},r_{\tau},\Lambda_{\tau},\theta_{\tau}))\times\dot{\zeta}(\tau)]_{SO(3)}
\end{align*}
induces in $\Sigma\times\so (3)$ the covariant derivative
\begin{multline*}
\frac{D^{\mathcal{A}}}{D\tau}(x(\tau),\rho(\tau),\theta(\tau),\zeta(\tau))=\\=(x(\tau),\rho(\tau),\theta(\tau),\dot{\zeta}(\tau)-\tilde{\mathcal{A}}(x,\rho,e,\theta,x_{\tau},\rho_{\tau},0,\theta_{\tau})\times\dot{\zeta}(\tau)).
\end{multline*}
We lift the section $(x,r(x),\Lambda(x),\theta(x))$ of $P\to X$ to the section in $J^1P\to X$,
$$(x,r(x),\Lambda(x),\theta(x)),ds+dt,r_sds+r_tdt,\Lambda_sds+\Lambda_tdt,\theta_sds+\theta_tdt).$$ 
To project this section to $J^1\Sigma\oplus(T^*X\otimes(\Sigma\times\so(3)))$ we evaluate
\begin{align*}
&\mathcal{A}(x,r,\Lambda,\theta,ds+dt,r_sds+r_tdt,\Lambda_sds+\Lambda_tdt,\theta_sds+\theta_tdt)=\\
=&\Lambda_s\Lambda^{-1}ds+\Lambda_t\Lambda^{-1}dt+\tilde{\mathcal{A}}(x,r,\Lambda,\theta,1,r_s,\Lambda_s,\theta_s)ds+\tilde{\mathcal{A}}(x,r,\Lambda,\theta,1,r_t,\Lambda_t,\theta_t)dt
\end{align*}
and 
\begin{align*}
&\mathrm{Ad}_{\Lambda^{-1}}\mathcal{A}(x,r,\Lambda,\theta,ds+dt,r_sds+r_tdt,\Lambda_sds+\Lambda_tdt,\theta_sds+\theta_tdt)=\\
&=\Omega ds+\omega dt+\bar{\mathcal{A}}_sds+\bar{\mathcal{A}}_t dt=\Xi ds +\xi dt,
\end{align*}
where $\Omega=\Lambda^{-1}\Lambda_s$, $\omega=\Lambda^{-1}\Lambda_t$,$$\bar{\mathcal{A}}_s=\mathrm{Ad}_{\Lambda^{-1}}\tilde{\mathcal{A}}(x,r,\Lambda,\theta,1,r_s,\Lambda_s,\theta_s)=\tilde{\mathcal{A}}(x,r,\Lambda,\theta,1,\rho_s,0,\theta_s),$$ $\bar{\mathcal{A}}_t=\tilde{\mathcal{A}}(x,r,\Lambda,\theta,1,\rho_t,0,\theta_t)$, $\Xi=\Omega +\bar{\mathcal{A}}_s$ and $\xi=\omega +\bar{\mathcal{A}}_t.$ Consequently, the bundle $J^1P/SO(3)$ is identified with FT$\mathfrak{LP}$ bundle $J^1\Sigma\oplus(T^*X\otimes(\Sigma\times\so(3)))$ via
\begin{align*}
\pi_{\so(3)}:J^1P/G\to &J^1\Sigma\oplus(T^*X\otimes(\Sigma\times\so(3)))\\
j^1(x,r(x),\Lambda(x),\theta(x))\mapsto&j^1(x,\rho(x),\theta(x))\oplus(x,\rho(x),\theta(x),\Xi(x)ds+\xi(x)dt)
\end{align*}
and the reduced variables are $\rho$, $\rho_s$, $\rho_t$, $\theta$, $\theta_s$, $\theta_t$, $\Xi$ and $\xi$.
The vertical Lagrange-Poincar\'e equations are:
\begin{equation*}
\mathrm{ad}^*_{\Xi ds+\xi dt}\frac{\delta l}{\delta(\Xi ds+\xi dt)}-\mathrm{div}^{\nabla}\frac{\delta l}{\delta(\Xi ds+\xi dt)}=0.
\end{equation*}
Hence, since 
$$\mathrm{ad}^*_{\Xi ds+\xi dt}\frac{\delta l}{\delta(\Xi ds+\xi dt)}=-\Xi\times\frac{\delta l}{\delta\Xi}-\xi\times\frac{\delta l}{\delta\xi},$$
and
$$\mathrm{div}^{\nabla}\frac{\delta l}{\delta(\Xi ds+\xi dt)}=\partial_s\frac{\delta l}{\delta\Xi}-\bar{\mathcal{A}}_s\times\frac{\delta l}{\delta\Xi}+\partial_t\frac{\delta l}{\delta\xi}-\bar{\mathcal{A}}_t\times\frac{\delta l}{\delta\xi},$$
we conclude that the vertical Lagrange--Poincar\'e equations are
\begin{equation}\label{LP1vert}
0=\partial_s\frac{\delta l}{\delta\Xi}+\partial_t\frac{\delta l}{\delta\xi}+\Omega\times\frac{\delta l}{\delta\Xi}+\omega\times\frac{\delta l}{\delta\xi}.
\end{equation}
On the other hand, the horizontal Lagrange--Poincar\'e equations applied to variation $\delta\rho\oplus\delta\theta$ are
\begin{multline*}
\left\langle \frac{\delta l}{\delta(\rho\oplus\theta)}-\partial_s\frac{\delta l}{\delta(\rho_s\oplus\theta_s)}-\partial_t\frac{\delta l}{\delta(\rho_t\oplus\theta_t)},\delta\rho\oplus\delta\theta\right\rangle=\\=\left\langle \frac{\delta l}{\delta(\Xi ds+\xi dt)},\tilde{B}((ds+dt,\rho_sds+\rho_tdt,\theta_sds+\theta_tdt),(0,\delta\rho,\delta\theta))\right\rangle  .
\end{multline*}
We shall note that $\delta l/\delta(\rho\oplus\theta)$ is not a partial derivative, instead we saw in section \ref{FTLPmec} that 
$$\left\langle \frac{\delta l}{\delta(\rho\oplus\theta)},\delta\rho\oplus\delta\theta\right\rangle=\frac{d}{d\epsilon}\biggr\vert_{\epsilon=0}l(u^h_{j^1\rho\oplus j^1\theta,\Xi ds+\xi dt}),$$
where $u(\epsilon)$ is a curve in $\Sigma=X\times\R^3\times\sS^1\times\sS^1\times\sS^1$ with $u'(0)=(0,\delta\rho,\delta\theta)$ and $u^h_{j^1\rho\oplus j^1\theta,\Xi ds+\xi dt}(\epsilon)=(u(\epsilon),j^1\rho(0),j^1\theta(0),\nu(\epsilon))$ the horizontal lift to $J^1\Sigma\oplus(T^*X\otimes(\Sigma\times\so(3)))$ with $\nu(\epsilon)=\nu_1(\epsilon)ds+\nu_2(\epsilon)dt$ such that $v(0)=\Xi ds+\xi dt$ and $\nu(\epsilon)$ is horizontal. From the connection in $\Sigma\times\so(3)$ this means that $$\dot{\nu}_1(\epsilon)ds+\dot{\nu}_2(\epsilon)dt=\mathcal{A}(0,\delta\rho,0,\delta\theta)\times(\nu_1(\epsilon)ds+\nu_2(\epsilon)dt).$$ In particular, for $\epsilon=0$;
$$\dot{\nu}_1(0)ds+\dot{\nu}_2(0)dt=\mathcal{A}(0,\delta\rho,0,\delta\theta)\times(\Xi ds+\xi dt),$$
and 
\begin{multline*}
\left\langle \frac{\delta l}{\delta(\rho\oplus\theta)},\delta\rho\oplus\delta\theta\right\rangle=\left\langle \frac{\partial l}{\partial\rho},\delta\rho\right\rangle +\left\langle \frac{\partial l}{\partial\theta},\delta\theta\right\rangle \\+\left\langle \frac{\delta l}{\delta\Xi},\tilde{\mathcal{A}}(0,\delta\rho,0,\delta\theta)\times\Xi\right\rangle +\left\langle \frac{\delta l}{\delta\xi},\tilde{\mathcal{A}}(0,\delta\rho,0,\delta\theta)\times\xi\right\rangle. 
\end{multline*}
It is possible to split the horizontal equations in $\Sigma$ in two parts. One related to $\R^3$ for which we make $\delta\theta=0$;
\begin{align}\label{LP1hor1}
&\left\langle \frac{\partial l}{\partial\rho}-\partial_s\frac{\delta l}{\partial\rho_s}-\partial_t\frac{\delta l}{\partial\rho_t},\delta\rho\right\rangle+\left\langle \frac{\delta l}{\delta\Xi},\tilde{\mathcal{A}}(0,\delta\rho,0,0)\times\Xi\right\rangle+\\&\left\langle \frac{\delta l}{\delta\xi},\tilde{\mathcal{A}}(0,\delta\rho,0,0)\times\xi\right\rangle =\left\langle \frac{\delta l}{\delta\Xi},\tilde{B}((1,\rho_s,\theta_s),\delta\rho)\right\rangle+\left\langle \frac{\delta l}{\delta\xi},\tilde{B}((1,\rho_t,\theta_t),\delta\rho)\right\rangle,\nonumber
\end{align}
and one related to $\sS^1\times\sS^1\times\sS^1$ for which $\delta\rho=0$;
\begin{align}\label{LP1hor2}
&\left\langle \frac{\partial l}{\partial\theta}-\partial_s\frac{\delta l}{\partial\theta_s}-\partial_t\frac{\delta l}{\partial\theta_t},\delta\theta\right\rangle+\left\langle \frac{\delta l}{\delta\Xi},\tilde{\mathcal{A}}(0,0,0,\delta\theta)\times\Xi\right\rangle+\\&\left\langle \frac{\delta l}{\delta\xi},\tilde{\mathcal{A}}(0,0,0,\delta\theta)\times\xi\right\rangle =\left\langle \frac{\delta l}{\delta\Xi},\tilde{B}((1,\rho_s,\theta_s),\delta\theta)\right\rangle+\left\langle \frac{\delta l}{\delta\xi},\tilde{B}((1,\rho_t,\theta_t),\delta\theta)\right\rangle\nonumber
\end{align}
In conclusion, the equations of motion of the molecular strand after reduction by $SO(3)$ are equations \eqref{LP1vert}, \eqref{LP1hor1} and \eqref{LP1hor2}. Yet, they are only equivalent to the unreduced equations if considered together with the reconstruction equation studied in \S\ref{rec} above. Observe that in $\mathrm{Ad}P$, $\omega^{\bar{\rho}}=\Lambda\Xi\Lambda^{-1}ds+\Lambda\xi\Lambda^{-1}dt$. Therefore,
$$d^{\mathcal{A}}\omega^{\bar{\rho}}=\mathrm{Ad}_{\Lambda}(\Xi_t-\bar{\mathcal{A}}_t\times\Xi-\xi_s+\bar{\mathcal{A}}_s\times\xi)dt\wedge ds,$$
and $$\omega^{\bar{\rho}}\wedge\omega^{\bar{\rho}}=(\Lambda\Xi\Lambda^{-1}ds+\Lambda\xi\Lambda^{-1}dt)\wedge(\Lambda\Xi\Lambda^{-1}ds+\Lambda\xi\Lambda^{-1}dt)=-\mathrm{Ad}_{\Lambda}(\Xi\times\xi)dt\wedge ds.$$
The reconstruction equation (\ref{eq:reconstruction}) is then written as
\begin{equation}
\xi_s-\bar{\mathcal{A}}_s\times\xi-\Xi_t+\bar{\mathcal{A}}_t\times\Xi-\Xi\times\xi+\mathrm{Ad}_{\Lambda}\tilde{B}((0,\rho_t,\theta_t,0,\rho_s,\theta_s))=0.
\end{equation}

\subsection{Second reduction} The group $S=\sS^1\times\sS^1\times\sS^1$ acts on $J^1\Sigma\oplus(T^*X\otimes(\Sigma\times\so(3)))$ by
\begin{align*}
\alpha\cdot(x,\rho,\theta,ds+dt,\rho_sds+\rho_tdt,\theta_sds+\theta_tdt)\oplus(x,\rho,\theta,\Xi ds+\xi dt)\\=(x,\rho,\theta+\alpha,ds+dt,\rho_sds+\rho_tdt,\theta_sds+\theta_tdt)\oplus(x,\rho,\theta+\alpha,\Xi ds+\xi dt)
\end{align*}
for all $\alpha\in \sS^1\times\sS^1\times\sS^1$. Let $R=\Sigma/(\sS^1\times\sS^1\times\sS^1)=X\times\R^3$. Since $\sS^1\times\sS^1\times\sS^1$ is abelian, the identification $\mathrm{Ad}\Sigma=R\times \R^3$ is immediate and the principal connection in $\Sigma\to R$ is trivial. Consequently, we have the identification
\begin{align*}
\pi_S:J^1\Sigma\oplus(T^*X&\otimes(\Sigma\times\so(3)))/S\to J^1R\oplus(T^*X\otimes(R\times\R^3\times\so(3)))\\
[(x,\rho,\theta,ds+dt,\rho_sds&+\rho_tdt,\theta_sds+\theta_tdt)\oplus(x,\rho,\theta,\Xi ds+\xi dt)]_S\mapsto\\&(x,\rho,ds+dt,\rho_sds+\rho_tdt)\oplus(x,\rho,\theta_sds+\theta_tdt,\Xi ds+\xi dt),
\end{align*}   
where $J^1R\oplus(T^*X\otimes(R\times\R^3\times\so(3)))$ is a FT$\mathfrak{LP}$ bundle with extra structure given by;
\begin{multline*}\nabla^{\mathfrak{s}}_{v_x,v_{\rho}}(x,\rho,\beta(x,\rho),\zeta(x,\rho))=\\=(x,\rho,\beta_{v_x,v_{\rho}},\zeta_{v_x,v_{\rho}}-\tilde{B}((v_x,v_{\rho},0),(0,0,\beta))-\tilde{\mathcal{A}}(v_x,v_{\rho},0,0)\times\zeta);
\end{multline*}
the $\R^3\times \so(3)$-valued $2$-form $\omega^{\mathfrak{s}}=0\oplus[\tilde{B}]_S$; and the Lie bracket
\begin{align*}
[(&\beta_1,\zeta_1),(\beta_2,\zeta_2)]^{\mathfrak{s}}=0\oplus\\&(-\mathcal{A}(0,0,0,\beta_1)\times\zeta_2+\mathcal{A}(0,0,0,\beta_2)\times\zeta_1-\tilde{B}((0,0,\beta_1),(0,0,\beta_2))+\zeta_1\times\zeta_2).
\end{align*}
The vertical Lagrange--Poincar\'e equations in the second step of reduction are
\begin{equation*}
\mathrm{ad}^*_{(\theta_s,\Xi)ds+(\theta_t,\xi)dt}\frac{\delta l}{\delta(\theta_sds+\theta_tdt,\Xi ds+\xi dt)}-\mathrm{div}^{\mathfrak{s}}\frac{\delta l}{\delta(\theta_sds+\theta_tdt,\Xi ds+\xi dt)}=0.
\end{equation*}
Given $(\delta\theta,\delta\eta)\in\R^3\times\so(3)$, we obtain that, on one hand,
\begin{align*}
&\left\langle \mathrm{ad}^*_{(\theta_s,\Xi)ds+(\theta_t,\xi)dt}\frac{\delta l}{\delta(\theta_sds+\theta_tdt,\Xi ds+\xi dt)},(\delta\theta,\delta\eta)\right\rangle \\&=\left\langle \frac{\delta l}{\delta(\theta_sds+\theta_tdt)}\oplus\frac{\delta l}{\delta(\Xi ds+\xi dt)},[(\theta_sds+\theta_tdt,\Xi ds+\xi dt),(\delta\theta,\delta\eta)]\right\rangle \\&=\left\langle \frac{\delta l}{\delta\Xi},-\mathcal{A}(0,0,0,\theta_s)\times\delta\eta+\mathcal{A}(0,0,0,\delta\theta)\times\Xi-\tilde{B}((0,0,\theta_s),(0,0,a))+\Xi\times\delta\eta\right\rangle \\
&\phantom{=}+\left\langle \frac{\delta l}{\delta\xi},-\mathcal{A}(0,0,0,\theta_t)\times\delta\eta+\mathcal{A}(0,0,0,\delta\theta)\times\Xi-\tilde{B}((0,0,\theta_t),(0,0,a))+\xi\times\delta\eta\right\rangle
\end{align*}
On the other hand,
\begin{align*}
&\left\langle \mathrm{div}^{\mathfrak{s}}\frac{\delta l}{\delta(\theta_sds+\theta_tdt,\Xi ds+\xi dt)},(\delta\theta,\delta\eta)\right\rangle=\bigg(
\left\langle \partial_s\frac{\delta l}{\delta\theta_s}+\partial_t\frac{\delta l}{\delta\theta_t},\delta\theta\right\rangle \\
 &\phantom{==}+\left\langle \frac{\delta l}{\delta\Xi},\tilde{B}((0,\rho_s,0),(0,0,\delta\theta))\right\rangle +\left\langle\frac{\delta l}{\delta\xi},\tilde{B}((0,\rho_t,0),(0,0,\delta\theta))\right\rangle \bigg)\\
 &\phantom{=}\oplus\left\langle \partial_s\frac{\delta l}{\delta\Xi}+\partial_t\frac{\delta l}{\delta\xi}+\mathcal{A}(0,\rho_s,0,0)\times\frac{\delta l}{\delta\Xi}+\mathcal{A}(0,\rho_t,0,0)\times\frac{\delta l}{\delta\xi},\delta\eta\right\rangle.
\end{align*}
From these two expressions, it can be proven that the vertical Lagrange-Poincar\'e equations can be split into two parts related to $\R^3$ and $\so(3)$. These respectively coincide with equations \eqref{LP1hor2} and \eqref{LP1vert}, except for the term $\delta l/\delta\theta$ which is zero from the symmetry of the Lagrangian with respect to $\sS^1\times\sS^1\times\sS^1$.
The horizontal Lagrange--Poincar\'e equations are
\begin{align*}
\left\langle \frac{\delta l}{\delta\rho}-\partial_s\frac{\delta l}{\delta\rho_s}-\partial_t\frac{\delta l}{\delta\rho_t},\delta\rho\right\rangle =\left\langle \frac{\delta l}{\delta\Xi},\tilde{B}((0,\rho_s,0),\delta\rho)\right\rangle +\left\langle \frac{\delta l}{\delta\xi},\tilde{B}((0,\rho_t,0),\delta\rho)\right\rangle. 
\end{align*}
In an analogous way as we did in the first step of reduction, we express the horizontal derivative in terms of partial and fiber derivatives as
\begin{multline*}
\left\langle \frac{\delta l}{\delta\rho},\delta\rho\right\rangle=\left\langle \frac{\partial l}{\partial\rho},\delta\rho\right\rangle +\left\langle \frac{\delta l}{\delta\Xi},\mathcal{A}(0,\delta\rho,0,0)\times\Xi\right\rangle +\left\langle \frac{\delta l}{\delta\xi},\mathcal{A}(0,\delta\rho,0,0)\times\xi\right\rangle.
\end{multline*}
Therefore, the horizontal Lagrange--Poincar\'e equation of step two coincides with equation \eqref{LP1hor1}. In conclusion, equations \eqref{LP1vert}, \eqref{LP1hor1} and \eqref{LP1hor2} are obtained as well in the second step of reduction. However, in the first step of reduction equation \eqref{LP1hor2} is an horizontal equation whereas in step two it is vertical. Furthermore, the reconstruction equation from the second step to the first one is easily seen to be 
\begin{equation}
\partial_t \theta_s=\partial_s\theta_t.
\end{equation}

\begin{remark}The trivial Maurer-Cartan connection may not always be the more convenient for a particular Lagrangian. In Mechanics, the study of a rigid body with rotors in \cite{Scheurle} is an example where the appropriate connection adapted to the Lagrangian is the mechanical connection (the connection defined by a Riemannian metric). Future work will study this example using the techniques of reduction by stages developed in \cite{BC}.
\end{remark}

\subsection{A particular Lagrangian} We now choose the particular Lagrangian in $J^1P$; 
\begin{align*}
L(r,r_s,r_t,\Lambda,\Lambda_s,\Lambda_t,\theta,&\theta_s,\theta_t)=\frac{1}{2}\langle r_t,r_t\rangle+ \frac{1}{2}\langle \Lambda^{-1}\Lambda_t ,I\Lambda^{-1}\Lambda_t\rangle\\&+\frac{1}{2}\langle \Lambda^{-1}\Lambda_t+\theta_t,K(\Lambda^{-1}\Lambda_t+\theta_t)\rangle-E(\Lambda^{-1}\Lambda_s,\theta_s,\langle r,r\rangle),
\end{align*}
where $I$ is the inertia tensor of the configuration of the strand, $K$ is the inertia tensor of the rotors and $E$ is a function called potential energy. This Lagrangian combines terms appearing in the Lagrangian given for the rigid body with rotors in \cite{Scheurle} and the Lagrangian proposed in \cite{Mol Strand} for the molecular strand. It is invariant by $SO(3)\times\sS\times\sS\times\sS$. If the first step of reduction is performed by group $SO(3)$ with the Maurer-Cartan connection, that is $\tilde{\mathcal{A}}=0$, then the reduced Lagrangian is:
\begin{align*}
l(\rho,\rho_s,\rho_t,\theta,\theta_s,\theta_t,\Omega,\omega)=&\frac{1}{2}\langle \rho_t+\omega\times\rho,\rho_t+\omega\times\rho\rangle+ \frac{1}{2}\langle \omega,I\omega\rangle\\&+\frac{1}{2}\langle \omega+\theta_t,K(\omega+\theta_t)\rangle-E(\Omega,\theta_s,\langle \rho,\rho\rangle),
\end{align*}
For this Lagrangian the vertical Lagrange--Poincar\'e equations \eqref{LP1vert} takes the form
\begin{align*}
0=&\rho\times(\rho_{tt}+2\omega\times\rho_t+\omega_t\times\rho+\langle\omega,\rho\rangle\omega)+(I+K)\omega_t+K\theta_{tt}\\&+\omega\times((I+K)\omega+K\theta_t)-\partial_s\frac{\delta E}{\delta\Omega}+\Omega\times\frac{\delta E}{\delta\Omega},
\end{align*}
while the horizontal Lagrange--Poincar\'e equations \eqref{LP1hor1} and \eqref{LP1hor2} are
\begin{align*}
\omega\times(\rho\times\omega-2\rho_t)-\rho_{tt}-\omega_t\times\rho&=2\rho\frac{\partial E}{\partial \langle\rho,\rho\rangle},
\\
K\omega_t+K\theta_{tt}&=\partial_s\frac{\delta E}{\delta\theta_s},
\end{align*}
together with the reconstruction equation is 
$$\omega_s-\Omega_t=\Omega\times\omega.$$
In turn, the second step of reduction defines the reduced Lagrangian
\begin{align*}
l(\rho,\rho_s,\rho_t,a,b,\Omega,\omega)=&\frac{1}{2}\langle \rho_t+\omega\times\rho,\rho_t+\omega\times\rho\rangle+ \frac{1}{2}\langle \omega,I\omega\rangle\\&+\frac{1}{2}\langle \omega+b,K(\omega+b)\rangle-E(\Omega,a,\langle \rho,\rho\rangle),
\end{align*}
whose vertical Lagrange--Poincar\'e equations;
\begin{align*}
&0=\rho\times(\rho_{tt}+2\omega\times\rho_t+\omega_t\times\rho+\langle\omega,\rho\rangle\omega)+(I+K)\omega_t+Kb_{t}\\&\hspace{50mm}+\omega\times((I+K)\omega+Kb)-\partial_s\frac{\delta E}{\delta\Omega}+\Omega\times\frac{\delta E}{\delta\Omega},
\\
&K\omega_t+Kb_{t}=\partial_s\frac{\delta E}{\delta a},
\end{align*}
the horizontal Lagrange--Poincar\'e equations are;
\begin{align*}
\omega\times(\rho\times\omega-2\rho_t)-\rho_{tt}-\omega_t\times\rho&=2\rho\frac{\partial E}{\partial \langle\rho,\rho\rangle},
\end{align*}
and reconstruction equation $a_t=b_s$, which makes this equations equivalent to the ones in the previous step.

{\small \noindent\textbf{Authors' addresses:} }

{\small \smallskip}

{\small \noindent M.A.B.:
Facultad de Ciencias Matem\'aticas, Universidad Complutense de Madrid, Plaza de
Ciencias 3, 28040--Madrid, Spain.
\newline
\emph{E-mail:\/} \texttt{mberbel@mat.ucm.es}
}

{\small \smallskip}

{\small \noindent M.C.L.:
Facultad de Ciencias Matem\'aticas, Universidad Complutense de Madrid, Plaza de
Ciencias 3, 28040--Madrid, Spain.
\newline
\emph{E-mail:\/} \texttt{mcastri@mat.ucm.es}
}

\end{document}